\documentclass[11pt]{article}
\usepackage{amsmath, amsfonts, amssymb, amsthm}

\setlength{\textwidth}{6.25in}
\setlength{\oddsidemargin}{0.125in}
\setlength{\textheight}{9.3in}
\setlength{\topmargin}{-.75in}

%\addtolength{\oddsidemargin}{-1in}
%\addtolength{\evensidemargin}{-1in}
%\addtolength{\textwidth}{1.9in}
%\addtolength{\topmargin}{-1in}
%\addtolength{\textheight}{1.9in}

\newtheorem{thm}{Theorem}
\newtheorem{prop}[thm]{Proposition}
\newtheorem{lem}[thm]{Lemma}

\newtheorem{cor}[thm]{Corollary}
\newtheorem{rem}[thm]{Remark}
\newtheorem{df}[thm]{Definition}

\renewcommand{\epsilon}{\varepsilon}
\renewcommand{\phi}{\varphi}

\newcommand{\BB}{\mathbb}

\newcommand{\im}{\operatorname{Im}}
\newcommand{\re}{\operatorname{Re}}

\newcommand{\parag}{${\cal x}$}
\newcommand{\rot}{{\cal R}}

\begin{document}

\title{On a Quaternionic Analogue of the Cross-Ratio}
%and Some Properties of Quaternionic Fractional Linear Transformations}
\author{Ewain Gwynne\footnote{Undergraduate student at Northwestern University,
Evanston, IL 60208} 
 and Matvei Libine\footnote{Department of Mathematics, Indiana University,
Rawles Hall, 831 East 3rd St, Bloomington, IN 47405}}
\maketitle

\begin{abstract}
In this article we study an exact analogue of the cross-ratio for the algebra
of quaternions $\BB H$ and use it to derive several interesting properties of
quaternionic fractional linear transformations.
In particular, we show that there exists a fractional linear transformation
$T$ on $\BB H$ mapping four distinct quaternions $q_1$, $q_2$, $q_3$ and $q_4$
into $q'_1$, $q'_2$, $q'_3$ and $q'_4$ respectively if and only if the
quadruples $(q_1, q_2, q_3, q_4)$ and $(q'_1, q'_2, q'_3, q'_4)$ have the same
cross-ratio.
If such a fractional linear transformation $T$ exists it is never unique.
However, we prove that a fractional linear transformation on $\BB H$ is uniquely
determined by specifying its values at five points in general position.
We also prove some properties of the cross-ratio including criteria
for four quaternions to lie on a single circle (or a line) and
for five quaternions to lie on a single 2-sphere (or a 2-plane).
As an application of the cross-ratio, we prove that fractional linear
transformations on $\BB H$ map spheres (or affine subspaces) of dimension
1, 2 and 3 into spheres (or affine subspaces) of the same dimension.
\end{abstract}

{\bf Keywords:}
quaternions, cross-ratio, fractional linear transformations,
conformal transformations, M\"obius transformations, M\"obius geometry.

\section{Introduction}

In this article we study an exact analogue of the cross-ratio for the algebra
of quaternions $\BB H$ and use it to derive several interesting properties of
quaternionic fractional linear transformations.
(Note that some authors prefer to call fractional linear transformations
``M\"obius transformations''.)
Recall that, if $z_1, z_2, z_3, z_4 \in \BB C$ are
distinct complex numbers, then their cross-ratio is
\begin{equation}  \label{C-ratio}
R_{\BB C}(z_1, z_2, z_3, z_4) = \frac{(z_4-z_1)(z_2-z_3)}{(z_2-z_1)(z_4-z_3)}.
\end{equation}
The cross-ratio has many important properties including invariance under
fractional linear transformations:
$$
R_{\BB C}(z_1, z_2, z_3, z_4) =
R_{\BB C} \bigl( T(z_1), T(z_2), T(z_3), T(z_4) \bigr)
$$
for all fractional linear transformations $T$ on $\BB C$.
Another way to characterize the cross-ratio is as follows.
Let $T$ be the unique fractional linear transformation sending $z_1$, $z_2$ and
$z_3$ into $0$, $1$ and $\infty$ respectively, then
$R_{\BB C}(z_1, z_2, z_3, z_4) = T(z_4)$.
The cross-ratio can be used to determine if given four distinct points
$z_1, z_2, z_3, z_4 \in \BB C$ lie on a circle or a straight line.
This happens if and only if $R_{\BB C}(z_1, z_2, z_3, z_4)$ is real.
As a consequence, one immediately obtains that a fractional linear
transformation maps circles and lines into circles and lines.
These properties are discussed in detail in most
complex analysis textbooks including, for example, \cite{A}.

In Section 2 we introduce our notations and list some basic facts about
quaternions.

In Section 3 we state the definition of the quaternionic cross-ratio
(Definition \ref{def}); it is essentially the same as the one introduced
in \cite{BP}, \cite{HJHP} and \cite{HJ}.
%(The earlier article \cite{BP} defines
%the cross-ratio for purely imaginary quaternions only, then
%the book \cite{HJ} extends this definition to all quaternions.)
We prove that there exists a fractional linear transformation $T$ on $\BB H$
mapping four distinct points $q_1$, $q_2$, $q_3$ and $q_4$ into
$q'_1$, $q'_2$, $q'_3$ and $q'_4$ respectively if and only if the quadruples
$(q_1, q_2, q_3, q_4)$ and $(q'_1, q'_2, q'_3, q'_4)$ have cross-ratios with
the same real parts and norms (Theorem \ref{main}).
The fact that the fractional linear transformations preserve the real part
and the norm of the quaternionic cross-ratio was proved in \cite{HJ}
by a different method.
The invariants of fractional linear transformations discussed in
Remark \ref{igor} were suggested to us by Igor Frenkel
(see also Proposition 3 in \cite{C1}).
We also note that a fractional linear transformation on $\BB H$ is never
determined uniquely by specifying its values at four points.

In Section 4 we derive various properties of the cross-ratio and
fractional linear transformations on $\BB H$.
First, we fix the images of three points in $\BB H$ and show that the set
of all possible images of a fourth point under a fractional linear
transformation is either a 2-sphere, a 2-plane or a single point
(Proposition \ref{4points}).
As a consequence of this result, we show that fractional linear
transformations on $\BB H$ map spheres (or affine subspaces) of dimension
1, 2 and 3 into spheres (or affine subspaces) of the same dimension
(Theorem \ref{spheres-thm}).
Theorem \ref{spheres-thm} was originally proved in \cite{BG}, but their proof
did not use the quaternionic cross-ratio.
Then we show that four quaternions $q_1, q_2, q_3, q_4$ lie on a single circle
(or a line) if and only if their cross-ratio is real (Proposition \ref{real}).
The last property is not new, see, for example,
\cite{BP}, \cite{HJHP}, \cite{HJ}, \cite{BG}.

In Section 5 we give a necessary and sufficient condition in terms of
cross-ratios for the existence of a fractional linear transformation with
prescribed values at five different points (Proposition \ref{main2}).
We prove that such a fractional linear transformation is uniquely
determined if these five points do not lie on a single 2-sphere or a 2-plane
(Proposition \ref{unique}).
Then we give a criterion for five quaternions to lie on a single 2-sphere
(or a 2-plane) (Lemma \ref{sphere-crit}).
Finally, we fix the images of four points in $\BB H$ and show that the set
of all possible images of a fifth point under a fractional linear
transformation is either a circle, a line or a single point
(Proposition \ref{5points}).
To the best of authors knowledge, the results of this section are new.

Finally, we comment that other properties of the quaternionic cross-ratio are
discussed in the book \cite{HJ}.
Moreover, there is a Clifford algebra  analogue of the cross-ratio
(for example, see \cite{C1}, \cite{BHJ}, \cite{HJ}),
and it would be interesting to determine if the results of our article extend
to Clifford algebras. For example, Proposition 1 in \cite{C1}
is a Clifford algebra analogue of Proposition \ref{real}.
We expect most results stated in our paper to have such an extension.

This paper was written as a part of REU (research experiences for
undergraduates) project at Indiana University during Summer 2011.
We would like to thank Professors Kevin Pilgrim and Bruce Solomon,
and the NSF for providing the organization and funding for the REU program
that made this project possible.
This REU program was supported by the NSF grant DMS-0851852.
The second author was supported by the NSF grant DMS-0904612.

\section{Preliminaries}

In this section we introduce our notations and list basic facts about
quaternions that we will use.
There are many texts providing elementary introductions to quaternions
including, for example, \cite{BFLPP} and \cite{HJ}.
Recall that the quaternions $\BB H$ form an algebra over $\BB R$ generated by
the units $1$, $i$, $j$, $k$.
The multiplicative structure is determined by the rules
\begin{center}
\begin{tabular}{c}
$1q=q1=q, \qquad \forall q \in \BB H,$ \\
$ij=-ji, \qquad ik=-ki, \qquad jk=-kj,$ \\
$i^2=j^2=k^2=ijk=-1,$
\end{tabular}
\end{center}
%$$
%1q=q1=q, \qquad \forall q \in \BB H,
%$$
%$$
%ij=-ji, \qquad ik=-ki, \qquad jk=-kj,
%$$
%$$
%i^2=j^2=k^2=ijk=-1,
%$$
and the fact that $\BB H$ is a division ring.
We write elements $q \in \BB H$ as
$$
q = t + ix + jy + kz, \qquad t,x,y,z \in \BB R,
$$
and use notations for
\begin{center}
\begin{tabular}{lcl}
quaternionic conjugate: & \qquad & $\bar q = t - ix - jy - kz$, \\
norm: & \qquad & $|q| = \sqrt{q \bar q} = \sqrt{\bar q q}
= \sqrt{t^2+x^2+y^2+z^2} \quad \in \BB R$, \\
real part: & \qquad & $\re q = (q+\bar q)/2 = t \quad \in \BB R$, \\
imaginary part: & \qquad &
$\im q = (q-\bar q)/2 = ix + jy + kz \quad \in \BB H$, \\
non-zero quaternions: & \qquad & $\BB H^{\times} = \BB H \setminus \{0\}$.
\end{tabular}
\end{center}
%\begin{align*}
%\bar q &= t - ix - jy - kz, \\
%|q| &= \sqrt{q \bar q} = \sqrt{\bar q q} = \sqrt{t^2+x^2+y^2+z^2}
%\quad \in \BB R, \\
%\re q &= \frac12(q+\bar q) = t \quad \in \BB R, \\
%\im q &= \frac12(q-\bar q) = ix + jy + kz \quad \in \BB H.
%\end{align*}
Then we have
\begin{align*}
\overline{q_1q_2} &= \bar q_2 \cdot \bar q_1, \\
|q_1q_2| &= |q_1| \cdot |q_2|, \\
\re(q_1q_2) &= \re(q_2q_1), \\
q^{-1} & = \bar q / |q|^2.
\end{align*}

Occasionally, it is convenient to use a matrix realization of quaternions.
We realize $\BB H$ as a subalgebra of the algebra of $2 \times 2$
complex matrices by identifying the units $1,i,j,k \in \BB H$ with
$$
1 \leftrightsquigarrow \begin{pmatrix} 1 & 0 \\ 0 & 1 \end{pmatrix}, \qquad
i \leftrightsquigarrow \begin{pmatrix} 0 & -i \\ -i & 0 \end{pmatrix}, \qquad
j \leftrightsquigarrow \begin{pmatrix} 0 & -1 \\ 1 & 0 \end{pmatrix}, \qquad
k \leftrightsquigarrow \begin{pmatrix} -i & 0 \\ 0 & i \end{pmatrix}.
$$
Thus
$$
q = t+ix+jy+kz \quad \leftrightsquigarrow \quad
\begin{pmatrix} t-iz & -y-ix \\ y-ix & t+iz \end{pmatrix},
%= \begin{pmatrix} q_{11} & q_{12} \\ q_{21} & q_{22} \end{pmatrix},
$$
and we get a matrix realization of quaternions:
$$
\BB H \:\simeq\: \biggl\{
\begin{pmatrix} a & b \\ -\overline{b} & \overline{a} \end{pmatrix}
\in \mathfrak{gl}(2,\BB C);\: a,b \in \BB C \biggr\}.
$$
Under this identification, $|q|^2$ and $2 \re q$ are respectively
the determinant and trace of the corresponding matrix, and
the unit sphere in $\BB H$ gets identified with $SU(2)$ in $GL(2,\BB C)$:
$$
S^3 = \{ q \in \BB H ;\: |q|=1 \} \simeq
\biggl\{ \begin{pmatrix} a & b \\ -\overline{b} & \overline{a} \end{pmatrix}
\in GL(2,\BB C);\: a,b \in \BB C ,\: \det
\begin{pmatrix} a & b \\ -\overline{b} & \overline{a} \end{pmatrix} =1 \biggr\}
= SU(2).
$$

Each non-zero quaternion $a \in \BB H^{\times}$ induces a transformation
$\operatorname{Conj}_a$ on $\BB H$:
$$
\operatorname{Conj}_a: q \mapsto aqa^{-1}, \qquad \forall q \in \BB H.
$$
The transformation $\operatorname{Conj}_a$ preserves the real parts and norms,
hence determines a rotation in the 3-dimensional space consisting of purely
imaginary quaternions, i.e. an element of $O(3)$.
Because $\BB H^{\times}$ is connected, $\operatorname{Conj}_a \in SO(3)$
and we obtain a map $\operatorname{Conj}: \BB H^{\times} \to SO(3)$,
$a \mapsto \operatorname{Conj}_a$.
Since $\operatorname{Conj}_a$ is the identity transformation whenever
$a \in \BB R^{\times} = \BB R \setminus \{0\}$, $\operatorname{Conj}$
descends to a map
$$
\BB H^{\times}/\BB R^{\times} = SU(2)/\{\pm Id\} \to SO(3). 
$$
It is well-known that this map is an analytic isomorphism.
(See, for example, \cite{H}, Example II in Chapter V, \parag 2.)
Note that if $\im a \ne 0$, then the axis of the rotation
$\operatorname{Conj}_a$ is the line passing through $\im a$.
These two lemmas follow immediately from the above discussion:

\begin{lem}  \label{conj}
Let $q, q' \in \BB H$, then $q' = aqa^{-1}$ for some
$a \in \BB H^{\times}$ if and only if
$|q|=|q'|$ and $\re q = \re q'$.
\end{lem}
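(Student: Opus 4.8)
The plan is to deduce both implications directly from the properties of the conjugation map $\operatorname{Conj}$ recalled above, so the argument is essentially a bookkeeping exercise.

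For the forward direction, suppose $q' = aqa^{-1} = \operatorname{Conj}_a(q)$ for some $a \in \BB H^{\times}$. As noted above, $\operatorname{Conj}_a$ fixes $\BB R \subset \BB H$ pointwise and acts as a rotation on the $3$-dimensional space of purely imaginary quaternions; in particular it preserves real parts and norms. Hence $\re q' = \re q$ and $|q'| = |q|$ follow immediately.

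For the converse, assume $\re q = \re q' =: t$ and $|q| = |q'|$. Write $q = t + \im q$ and $q' = t + \im q'$. Since $|q|^2 = t^2 + |\im q|^2$ and likewise for $q'$, the norm condition yields $|\im q| = |\im q'|$. If $\im q = \im q' = 0$ then $q = q' = t$ and we may take $a = 1$. Otherwise $\im q$ and $\im q'$ are two purely imaginary quaternions of the same positive length, hence lie in a common orbit of $SO(3)$ acting on the space of imaginary quaternions; choose $\rho \in SO(3)$ with $\rho(\im q) = \im q'$. By the isomorphism $\BB H^{\times}/\BB R^{\times} \to SO(3)$ induced by $\operatorname{Conj}$, there is $a \in \BB H^{\times}$ with $\operatorname{Conj}_a = \rho$. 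Because $\operatorname{Conj}_a$ fixes the real part, $\operatorname{Conj}_a(q) = t + \operatorname{Conj}_a(\im q) = t + \im q' = q'$, i.e.\ $q' = aqa^{-1}$.

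There is no real obstacle here: the two ingredients — that any two vectors of equal norm in $\BB R^3$ are related by some element of $SO(3)$, and that $\operatorname{Conj}$ maps onto $SO(3)$ — are both already recorded in the preceding discussion, so the only thing to be careful about is the elementary identity $|q|^2 = (\re q)^2 + |\im q|^2$ used to pass from equality of norms of $q,q'$ to equality of norms of $\im q, \im q'$.
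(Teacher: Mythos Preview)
Your proof is correct and follows exactly the approach the paper has in mind: the paper does not write out a proof at all, but simply states that the lemma ``follows immediately from the above discussion,'' meaning the surjectivity of $\operatorname{Conj}: \BB H^{\times} \to SO(3)$ and the fact that $\operatorname{Conj}_a$ preserves real parts and norms. You have merely unpacked those two ingredients (together with the identity $|q|^2 = (\re q)^2 + |\im q|^2$), which is precisely what is intended.
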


\begin{lem}  \label{commute}
Two quaternions $q_1, q_2 \in \BB H$ commute with each other if and only if
one of $\im q_1$, $\im q_2$ is a real multiple of the other.
\end{lem}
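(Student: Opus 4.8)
The plan is to reduce the statement to a fact about rotations of $\BB R^3$ via the map $\operatorname{Conj}$ from the discussion above. Write $q_\ell=\re q_\ell+\im q_\ell$ for $\ell=1,2$. First I would dispose of the degenerate cases. If $\im q_1=0$, then $q_1\in\BB R$ lies in the center of $\BB H$, so $q_1q_2=q_2q_1$; and at the same time $\im q_1=0=0\cdot\im q_2$ is a real multiple of $\im q_2$, so both sides of the asserted equivalence hold. The case $\im q_2=0$ is symmetric. Hence from now on I may assume $\im q_1\ne 0$ and $\im q_2\ne 0$; in particular $q_1\in\BB H^{\times}$, so that $\operatorname{Conj}_{q_1}$ is defined.

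In this remaining case, since $q_1$ is invertible, the equation $q_1q_2=q_2q_1$ is equivalent to $\operatorname{Conj}_{q_1}(q_2)=q_1q_2q_1^{-1}=q_2$. Because $\operatorname{Conj}_{q_1}$ is $\BB R$-linear, preserves real parts, and restricts to the identity on $\BB R$, this holds if and only if $\operatorname{Conj}_{q_1}(\im q_2)=\im q_2$, an identity among purely imaginary quaternions. By the discussion above, since $\im q_1\ne 0$ the restriction of $\operatorname{Conj}_{q_1}$ to the $3$-dimensional space of imaginary quaternions is a nontrivial rotation in $SO(3)$, and its axis is the line $\BB R\cdot\im q_1$.

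The proof then finishes with the elementary observation that a nontrivial rotation of $\BB R^3$ fixes a nonzero vector if and only if that vector lies on its axis. Applying this to $\im q_2\ne 0$, we get $\operatorname{Conj}_{q_1}(\im q_2)=\im q_2$ if and only if $\im q_2\in\BB R\cdot\im q_1$, i.e.\ $\im q_2$ is a real multiple of $\im q_1$; since both imaginary parts are nonzero, this is the same as saying one of $\im q_1,\im q_2$ is a real multiple of the other. Combined with the degenerate cases, this proves the lemma. I do not expect a genuine obstacle here: the only points requiring care are the bookkeeping of the cases in which one of the quaternions is real, and the standard fact about fixed points of elements of $SO(3)$. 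If one preferred to avoid invoking $SO(3)$, an alternative is the direct computation $(\im q_1)(\im q_2)-(\im q_2)(\im q_1)=2\,(\im q_1)\times(\im q_2)$ under the identification of imaginary quaternions with vectors in $\BB R^3$, whose right-hand side vanishes exactly when the two vectors are linearly dependent; I would nonetheless keep the $\operatorname{Conj}$ argument to match the phrasing that ``these two lemmas follow immediately from the above discussion.''
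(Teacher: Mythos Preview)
Your proposal is correct and matches the paper's intended approach: the paper provides no explicit proof, only the sentence ``these two lemmas follow immediately from the above discussion,'' and your argument is precisely the spelling-out of that discussion via $\operatorname{Conj}_{q_1}\in SO(3)$ and its axis $\BB R\cdot\im q_1$. The only point worth a word of caution is your assertion that $\operatorname{Conj}_{q_1}$ is a \emph{nontrivial} rotation when $\im q_1\ne 0$; this is indeed justified by the isomorphism $\BB H^{\times}/\BB R^{\times}\simeq SO(3)$ stated just before Lemma~\ref{conj}, so there is no gap.
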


Let $GL(2,\BB H)$ be the group consisting of invertible $2 \times 2$
matrices with entries in $\BB H$.
It acts on $\widehat{\BB H} = \BB H \cup \{\infty\}$
by fractional linear (or M\"obius) transformations:
$$
\pi(\gamma): \: q \: \mapsto \: (aq+b)(cq+d)^{-1},
%= (a'-qc')^{-1}(-b'+qd'), \\
\qquad \text{where }
\gamma = \begin{pmatrix} a & b \\ c & d \end{pmatrix} \in GL(2,\BB H).
%\gamma^{-1} = \begin{pmatrix} a' & b' \\ c' & d' \end{pmatrix}.
$$

First we show that there always exists a fractional linear transformation
sending any three distinct points $q_1, q_2, q_3$ in $\widehat{\BB H}$
into 0, 1 and $\infty$ respectively.

\begin{lem}  \label{01infty}
Given any three distinct points $q_1, q_2, q_3 \in \widehat{\BB H}$,
there exists a fractional linear transformation $T_{q_1,q_2,q_3}$ such that
$T_{q_1,q_2,q_3}(q_1)=0$, $T_{q_1,q_2,q_3}(q_2)=1$ and $T_{q_1,q_2,q_3}(q_3)=\infty$.
\end{lem}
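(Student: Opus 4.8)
The plan is to construct $T_{q_1,q_2,q_3}$ explicitly, mimicking the classical complex-analytic formula but being careful that $\BB H$ is noncommutative, so that the order of multiplication matters and ``division'' must be written as multiplication by an inverse on a definite side. I would first treat the generic case in which all three of $q_1, q_2, q_3$ lie in $\BB H$ (none is $\infty$), and then handle the three special cases where one of the points is $\infty$ separately, since the formula degenerates there.

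For the generic case, I would propose the candidate
$$
T_{q_1,q_2,q_3}(q) = (q_2 - q_3)^{-1}(q_2-q_1)(q-q_1)^{-1}(q-q_3),
$$
or a similarly bracketed expression, and then verify directly that $T_{q_1,q_2,q_3}(q_1) = \infty$ — wait, I actually want $q_1 \mapsto 0$, so I would instead arrange the factor $(q - q_1)$ to appear on the left in the numerator, e.g.
$$
T_{q_1,q_2,q_3}(q) = (q-q_1)(q-q_3)^{-1}\cdot(q_2-q_3)(q_2-q_1)^{-1},
$$
reading off that substituting $q = q_1$ gives $0$, substituting $q = q_3$ gives $\infty$ (the factor $(q-q_3)^{-1}$ blows up), and substituting $q = q_2$ gives $(q_2-q_1)(q_2-q_3)^{-1}(q_2-q_3)(q_2-q_1)^{-1} = 1$. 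The key point I must check is that this map is genuinely a fractional linear transformation in the sense defined just above the lemma, i.e. that it equals $\pi(\gamma)$ for some $\gamma \in GL(2,\BB H)$; this amounts to writing the map as $q \mapsto (aq+b)(cq+d)^{-1}$ by expanding $(q-q_1) = q - q_1$ and $(q-q_3)^{-1}$, multiplying out, and identifying $a,b,c,d$, then confirming $\det \gamma \neq 0$ (using that $q_1 \neq q_3$). Since composition of fractional linear transformations corresponds to matrix multiplication, it is cleanest to exhibit $T_{q_1,q_2,q_3}$ as a composition of the manifestly-fractional-linear maps $q \mapsto (q - q_1)(q - q_3)^{-1}$ followed by right multiplication by the constant $(q_2-q_3)(q_2-q_1)^{-1} \in \BB H^\times$, each of which corresponds to an explicit invertible $2\times 2$ quaternionic matrix.

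For the degenerate cases: if $q_1 = \infty$, take $T(q) = (q - q_3)(q_2 - q_3)^{-1}$ reversed appropriately so that $\infty \mapsto 0$, e.g. $q \mapsto (q - q_2)^{-1}\cdots$; if $q_2 = \infty$, take a map of the form $q \mapsto (q-q_1)(q-q_3)^{-1}$ up to a constant making $q_2 = \infty \mapsto 1$; if $q_3 = \infty$, take $q \mapsto (q - q_1)(q_2 - q_1)^{-1}$, which sends $q_1 \mapsto 0$, $q_2 \mapsto 1$, $\infty \mapsto \infty$. Each of these is visibly of the required form.

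I expect the main obstacle to be purely bookkeeping rather than conceptual: getting the left/right placement of every factor and inverse correct so that all three interpolation conditions hold simultaneously, and then verifying that the resulting $\gamma$ is invertible in $GL(2,\BB H)$ (noncommutativity means one cannot just cite the scalar determinant criterion, so I would instead argue invertibility by exhibiting $\gamma$ as a product of elementary invertible matrices — translations $\begin{pmatrix} 1 & b \\ 0 & 1 \end{pmatrix}$, the inversion $\begin{pmatrix} 0 & 1 \\ 1 & 0 \end{pmatrix}$, and diagonal matrices $\begin{pmatrix} a & 0 \\ 0 & 1 \end{pmatrix}$ with $a \in \BB H^\times$ — each of which is obviously invertible). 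There is no genuine difficulty, since the three target points $0, 1, \infty$ are themselves in ``general position'' and the construction is fully constructive.
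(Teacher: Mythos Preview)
Your approach is correct and essentially the same as the paper's: construct the map explicitly, verify the three interpolation conditions, and exhibit the underlying $2\times 2$ matrix as a product of manifestly invertible factors. The only differences are cosmetic: the paper disposes of the $\infty$-cases in one line by precomposing with an auxiliary fractional linear transformation rather than treating them separately, and it orders the constant factors as
$$
T_{q_1,q_2,q_3}(q) = (q_2 - q_1)^{-1} (q - q_1)(q - q_3)^{-1}(q_2 - q_3),
$$
with one constant on each side, writing $\gamma = \begin{pmatrix} (q_2-q_1)^{-1} & 0 \\ 0 & (q_2-q_3)^{-1} \end{pmatrix}\begin{pmatrix} 1 & -q_1 \\ 1 & -q_3 \end{pmatrix}$. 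This particular ordering is not accidental: it makes $T_{q_1,q_2,q_3}(q_4)$ literally equal to the cross-ratio $Q(q_1,q_2,q_3,q_4)$ defined immediately afterward, a fact the paper exploits later (e.g.\ in the proof of Proposition~\ref{real}). Your formula $(q-q_1)(q-q_3)^{-1}(q_2-q_3)(q_2-q_1)^{-1}$ is equally valid for the lemma itself but would give a conjugate of the cross-ratio rather than the cross-ratio on the nose.
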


\begin{proof}
Applying a fractional linear transformation if necessary, we can assume that
none of $q_1, q_2, q_3$ is $\infty$. Let
$$
T_{q_1,q_2,q_3}(q) = (q_2 - q_1)^{-1} (q - q_1)(q - q_3)^{-1}(q_2 - q_3).
$$
Then $T_{q_1,q_2,q_3}$ maps $q_1$ to $0$, $q_2$ to $1$, $q_3$ to $\infty$, and
$T_{q_1,q_2,q_3} = \pi(\gamma)$, where
$$
\gamma = \begin{pmatrix} (q_2 - q_1)^{-1} & 0 \\ 0 &  (q_2-q_3)^{-1} \end{pmatrix}
\begin{pmatrix} 1 & -q_1 \\ 1 & -q_3 \end{pmatrix} \in GL(2,\BB H).
$$
\end{proof}

We will often use this lemma to reduce the general case when $q_1,q_2,q_3$
are three distinct points in $\widehat{\BB H}$ to the case when
$q_1=0$, $q_2=1$, $q_3=\infty$.

\begin{cor}
Given any three distinct points $q_1, q_2, q_3 \in \widehat{\BB H}$
and another triple of distinct points $q'_1, q'_2, q'_3 \in \widehat{\BB H}$,
there exists a fractional linear transformation $T$ such that
$T(q_n)=q'_n$, $n=1,2,3$.
\end{cor}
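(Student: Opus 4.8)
The plan is to reduce to the normalized triple $0,1,\infty$ and then compose. By Lemma~\ref{01infty} there is a fractional linear transformation $T_{q_1,q_2,q_3}$ carrying $q_1,q_2,q_3$ to $0,1,\infty$ respectively, and likewise a fractional linear transformation $T_{q'_1,q'_2,q'_3}$ carrying $q'_1,q'_2,q'_3$ to $0,1,\infty$ respectively. Writing $T_{q_1,q_2,q_3}=\pi(\gamma)$ and $T_{q'_1,q'_2,q'_3}=\pi(\gamma')$ with $\gamma,\gamma'\in GL(2,\BB H)$ as in the proof of that lemma, I would take
$$
T \;=\; \pi(\gamma')^{-1}\circ\pi(\gamma).
$$
For $n=1,2,3$ we then have $\pi(\gamma)(q_n)\in\{0,1,\infty\}$, and $\pi(\gamma')^{-1}$ sends $0,1,\infty$ back to $q'_1,q'_2,q'_3$ because $\pi(\gamma')$ sends $q'_n$ to $0,1,\infty$; hence $T(q_n)=q'_n$, which is what we want.

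The one thing that genuinely has to be checked is that $T$ is again a fractional linear transformation; equivalently, that $\pi$ is a left action of $GL(2,\BB H)$ on $\widehat{\BB H}$, so that $\pi(\gamma')^{-1}=\pi(\gamma'^{-1})$ and therefore $T=\pi(\gamma'^{-1}\gamma)$. This is the only place where the non-commutativity of $\BB H$ could cause trouble, but a direct computation gives $\pi(\gamma_1\gamma_2)=\pi(\gamma_1)\circ\pi(\gamma_2)$: with $w=\pi(\gamma_2)(q)=(a_2q+b_2)(c_2q+d_2)^{-1}$ one finds $a_1w+b_1=\bigl((a_1a_2+b_1c_2)q+(a_1b_2+b_1d_2)\bigr)(c_2q+d_2)^{-1}$ and $c_1w+d_1=\bigl((c_1a_2+d_1c_2)q+(c_1b_2+d_1d_2)\bigr)(c_2q+d_2)^{-1}$, and in the product $(a_1w+b_1)(c_1w+d_1)^{-1}$ the two factors $(c_2q+d_2)^{\pm 1}$ cancel, leaving exactly $\pi(\gamma_1\gamma_2)(q)$. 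Since also $\pi(Id)=\mathrm{id}$, this yields $\pi(\gamma)^{-1}=\pi(\gamma^{-1})$ for all $\gamma\in GL(2,\BB H)$, so the fractional linear transformations form a group and $T$ lies in it.

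I expect this to be essentially the entire argument. The only mildly technical points are the cancellation computation above and the bookkeeping associated with the value $\infty$ --- the cases where some $q_n$, or an intermediate value, equals $\infty$, or where a ``denominator'' $cq+d$ vanishes --- all of which are dealt with by the standard conventions for M\"obius transformations on $\widehat{\BB H}$ and involve no new ideas.
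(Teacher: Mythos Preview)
Your proposal is correct and is exactly the paper's argument: set $T=(T_{q'_1,q'_2,q'_3})^{-1}\circ T_{q_1,q_2,q_3}$. The paper states this in one line without verifying that $\pi$ is an action, so your added check that $\pi(\gamma_1\gamma_2)=\pi(\gamma_1)\circ\pi(\gamma_2)$ is extra detail rather than a different approach.
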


\begin{proof}
Setting $T= (T_{q'_1,q'_2,q'_3})^{-1} \circ T_{q_1,q_2,q_3}$
gives the desired transformation.
\end{proof}

The following lemma implies that the fractional linear transformation
in the above corollary is never unique.

\begin{lem}  \label{conj-transf}
A fractional linear transformation $T$ maps $0$ to $0$, $1$ to $1$,
and $\infty$ to $\infty$ if and only if $T$ is of the form
$T(q) = a q a^{-1}$ for some $a \in \BB H^{\times}$. 
\end{lem}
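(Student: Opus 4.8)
The plan is to handle the two implications separately, with essentially all of the content residing in the converse. For the ``if'' direction, observe that $q \mapsto aqa^{-1}$ is the fractional linear transformation $\pi(\gamma)$ with $\gamma = \begin{pmatrix} a & 0 \\ 0 & a \end{pmatrix} \in GL(2,\BB H)$, and it manifestly fixes $0$ and $\infty$ and sends $1$ to $a1a^{-1} = 1$.

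For the converse, write $T = \pi(\gamma)$ with $\gamma = \begin{pmatrix} a & b \\ c & d \end{pmatrix} \in GL(2,\BB H)$, so that $T(q) = (aq+b)(cq+d)^{-1}$. First I would use $T(\infty) = \infty$ to force $c = 0$. For $q \in \BB H^{\times}$, factoring $q$ out on the right of both $aq+b$ and $cq+d$ and cancelling gives $T(q) = (a + bq^{-1})(c + dq^{-1})^{-1}$; letting $|q| \to \infty$, this tends to $ac^{-1}$ when $c \neq 0$, which is a finite quaternion, so $T(\infty) = \infty$ is possible only if $c = 0$. (Equivalently, one invokes the standard convention $\pi(\gamma)(\infty) = ac^{-1}$ for $c \neq 0$ and $\pi(\gamma)(\infty) = \infty$ for $c = 0$.) Once $c = 0$, invertibility of $\gamma$ guarantees $a, d \in \BB H^{\times}$, and $T(q) = (aq+b)d^{-1}$.

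It then remains to read off the other two conditions. From $T(0) = 0$ we get $bd^{-1} = 0$, hence $b = 0$ and $T(q) = aqd^{-1}$; from $T(1) = 1$ we get $ad^{-1} = 1$, hence $a = d$. Therefore $T(q) = aqa^{-1}$ with $a = d \in \BB H^{\times}$, as desired.

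I do not expect a genuine obstacle here. The only step requiring a little care is the evaluation at $\infty$, where non-commutativity means one must cancel $q$ on the correct (right) side before passing to the limit; invertibility of $\gamma$ is used only to ensure that $a$ and $d$ are nonzero once $c = 0$, so that $d^{-1}$ makes sense. This lemma also makes precise the remark preceding it: since $\operatorname{Conj}_a$ fixes $0$, $1$ and $\infty$ for every $a \in \BB H^{\times}$, inserting $\operatorname{Conj}_a$ between $T_{q_1,q_2,q_3}$ and $(T_{q'_1,q'_2,q'_3})^{-1}$ produces infinitely many fractional linear transformations sharing the same values at $q_1, q_2, q_3$.
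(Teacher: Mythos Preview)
Your proof is correct and follows essentially the same approach as the paper's: write $T(q)=(aq+b)(cq+d)^{-1}$ and use the three conditions to force $b=0$, $c=0$, and $a=d$. The only differences are cosmetic---you apply the three conditions in a different order, explicitly verify the easy ``if'' direction, and give a more careful justification of why $T(\infty)=\infty$ forces $c=0$---none of which changes the substance of the argument.
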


\begin{proof}
Write $T(q) = (aq + b)(cq +d)^{-1}$ for some $a, b , c, d \in \BB H$.
The condition that $T(0)=0$ implies that $b = 0$;
the condition that $T(\infty)=\infty$ implies that $c = 0$;
the condition that $T(1)=1$ implies that $a+b=c+d$, and hence that $a=d$.
Thus, $T(q) = aqa^{-1}$, as desired. 
\end{proof}

We finish this section by restating Lemma 10 from \cite{FL1}:

\begin{lem}  \label{Lemma 10}
Let $q_1, q_2 \in \BB H$,
$\gamma = \begin{pmatrix} a & b \\ c & d \end{pmatrix} \in GL(2,\BB H)$.
Write $\gamma^{-1} = \begin{pmatrix} a' & b' \\ c' & d' \end{pmatrix}$,
$\tilde q_1 = (aq_1+b)(cq_1+d)^{-1}$ and $\tilde q_2 = (aq_2+b)(cq_2+d)^{-1}$.
Then
\begin{align*}
(\tilde q_1 - \tilde q_2) &=
(a'-q_2c')^{-1} \cdot (q_1-q_2) \cdot (cq_1+d)^{-1}  \\
&= (a'-q_1c')^{-1} \cdot (q_1-q_2) \cdot (cq_2+d)^{-1}.
\end{align*}
\end{lem}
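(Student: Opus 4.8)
The plan is to pass to homogeneous coordinates and exploit the cocycle relation hidden in $\gamma\gamma^{-1}=\gamma^{-1}\gamma=Id$. Set $w_i=cq_i+d$, which is invertible (this is implicit in the very definition of $\tilde q_i$). From $\gamma\begin{pmatrix}q_i\\1\end{pmatrix}=\begin{pmatrix}aq_i+b\\cq_i+d\end{pmatrix}=\begin{pmatrix}\tilde q_i\\1\end{pmatrix}w_i$ one reads off, dually, the row identity
$$
\begin{pmatrix}1 & -\tilde q_1\end{pmatrix}\gamma=(a-\tilde q_1 c)\begin{pmatrix}1 & -q_1\end{pmatrix},
$$
whose first entry is automatic and whose second entry amounts exactly to $\tilde q_1 w_1=aq_1+b$.

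First I would right-multiply this identity by $\gamma^{-1}$ to get $\begin{pmatrix}1 & -\tilde q_1\end{pmatrix}=(a-\tilde q_1 c)\begin{pmatrix}a'-q_1 c' & b'-q_1 d'\end{pmatrix}$; comparing first entries yields $(a-\tilde q_1 c)(a'-q_1 c')=1$, so, $\BB H$ being a division ring, $a'-q_1 c'$ is automatically invertible and $a-\tilde q_1 c=(a'-q_1 c')^{-1}$ (thus the inverses in the statement need not be assumed to exist). Then, using $\begin{pmatrix}\tilde q_2\\1\end{pmatrix}=\gamma\begin{pmatrix}q_2\\1\end{pmatrix}w_2^{-1}$,
$$
\tilde q_2-\tilde q_1=\begin{pmatrix}1 & -\tilde q_1\end{pmatrix}\begin{pmatrix}\tilde q_2\\1\end{pmatrix}=(a-\tilde q_1 c)\begin{pmatrix}1 & -q_1\end{pmatrix}\begin{pmatrix}q_2\\1\end{pmatrix}w_2^{-1}=(a-\tilde q_1 c)(q_2-q_1)w_2^{-1}.
$$
Substituting $a-\tilde q_1 c=(a'-q_1 c')^{-1}$ and $w_2=cq_2+d$ and negating gives the second formula of the lemma; running the same argument with the roles of $q_1$ and $q_2$ interchanged (i.e.\ starting from $\begin{pmatrix}1 & -\tilde q_2\end{pmatrix}\gamma=(a-\tilde q_2 c)\begin{pmatrix}1 & -q_2\end{pmatrix}$) gives the first.

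The only genuine source of difficulty is noncommutativity: there is no ``determinant of $\gamma$'' to cancel as in the complex case, so I must be scrupulous about keeping every factor on the correct side, and the natural left factor $a-\tilde q_1 c$ has to be recognized as $(a'-q_1 c')^{-1}$ rather than guessed. As an independent check one could instead clear denominators --- multiply the claimed identity by $a'-q_2 c'$ on the left and by $cq_1+d$ on the right --- and expand using the eight scalar relations packaged in $\gamma\gamma^{-1}=\gamma^{-1}\gamma=Id$; this works but the bookkeeping is heavier and less illuminating.
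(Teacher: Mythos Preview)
Your argument is correct. The homogeneous-coordinate trick is clean: the row identity $\begin{pmatrix}1 & -\tilde q_1\end{pmatrix}\gamma=(a-\tilde q_1 c)\begin{pmatrix}1 & -q_1\end{pmatrix}$ holds for exactly the reason you give, right-multiplying by $\gamma^{-1}$ and reading the first entry does yield $(a-\tilde q_1 c)(a'-q_1 c')=1$ (and in the division ring $\BB H$ a one-sided inverse is two-sided), and the pairing with the column $\begin{pmatrix}\tilde q_2\\1\end{pmatrix}=\gamma\begin{pmatrix}q_2\\1\end{pmatrix}w_2^{-1}$ then produces $\tilde q_2-\tilde q_1=(a'-q_1 c')^{-1}(q_2-q_1)(cq_2+d)^{-1}$, whence both formulas by symmetry.

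There is nothing in the paper to compare your argument to: the paper does not prove this lemma at all but merely restates it from \cite{FL1} (Lemma~10 there). So you have supplied a self-contained proof where the paper only gives a citation. Your approach --- packaging the fractional linear action as the projective action of $\gamma$ on column vectors and using the dual row action to extract the left factor --- is the standard and arguably most transparent way to handle such difference identities over a noncommutative ring; it avoids the brute-force expansion you mention as an alternative.
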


\section{Quaternionic Cross-Ratio}

In this section we introduce a quaternionic analogue of the cross-ratio
and prove some of its properties.

\begin{df}  \label{def}
Given four distinct points $q_1, q_2, q_3, q_4 \in \widehat{\BB H}$,
we define their {\em cross-ratio} to be
$$
Q(q_1, q_2, q_3, q_4) =
(q_2 - q_1)^{-1} (q_4 - q_1)(q_4 - q_3)^{-1}(q_2 - q_3) \quad \in \BB H.
$$
(If one of the $q_n=\infty$, the cross-ratio is defined by letting this
$q_n \to \infty$ and taking limits.)
\end{df}

Setting $q_1=0$, $q_2=1$ and letting $q_3 \to \infty$ we get:
\begin{equation}  \label{q}
Q(0, 1, \infty, q) = q.
\end{equation}
It will be convenient to introduce a notation
$$
R_{\BB H}(q_1, q_2, q_3, q_4) = \bigl(
|Q(q_1, q_2, q_3, q_4)|, \re Q(q_1, q_2, q_3, q_4) \bigr) \quad \in \BB R^2.
$$
Comparing the definition of $R_{\BB H}$ with the complex case (\ref{C-ratio}),
note that  the magnitude of a complex number together with its real part
determine that complex number up to complex conjugation,
but this is not the case for quaternions.

This definition originally appeared in \cite{BP}, where the cross-ratio
was defined for purely imaginary quaternions only,
then in \cite{HJHP} and \cite{HJ} it was extended to all quaternions.
Note that some authors define the quaternionic cross-ratio as
$$
\re Q(q_1, q_2, q_3, q_4) \pm i |\im Q(q_1, q_2, q_3, q_4)| \quad \in \BB C.
$$
This is a complex number defined up to conjugation which uniquely determines
$R_{\BB H}(q_1, q_2, q_3, q_4)$ and in turn can be recovered from
$R_{\BB H}(q_1, q_2, q_3, q_4)$.

Perhaps, the most important property of the quaternionic cross-ratio is that
the quantity $R_{\BB H}(q_1, q_2, q_3, q_4)$ stays invariant under fractional
linear transformations. This property is a part of the following theorem.

\begin{thm}  \label{main}
Given four distinct points $q_1, q_2, q_3, q_4 \in \widehat{\BB H}$
and another quadruple of distinct points
$q'_1, q'_2, q'_3, q'_4 \in \widehat{\BB H}$, there exists a fractional linear
transformation $T$ such that $T(q_n)=q'_n$, $n=1,2,3,4$, if and only if
$$
R_{\BB H}(q_1, q_2, q_3, q_4) = R_{\BB H}(q'_1, q'_2, q'_3, q'_4).
$$
\end{thm}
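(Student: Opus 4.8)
The plan is to prove the two implications separately. For the ``only if'' direction I would show that every fractional linear transformation leaves $R_{\BB H}$ unchanged; for the ``if'' direction I would assemble $T$ out of the transformations supplied by Lemma~\ref{01infty} together with a single conjugation.

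For necessity, write $T = \pi(\gamma)$ with $\gamma = \begin{pmatrix} a & b \\ c & d \end{pmatrix}$ and put $\tilde q_n = T(q_n)$. The cross-ratio $Q(\tilde q_1,\tilde q_2,\tilde q_3,\tilde q_4)$ is built from the four differences $\tilde q_2-\tilde q_1$, $\tilde q_4-\tilde q_1$, $\tilde q_4-\tilde q_3$, $\tilde q_2-\tilde q_3$, and Lemma~\ref{Lemma 10} gives two expressions for each. I would substitute, into each of the four slots, whichever of the two expressions makes the adjacent factors of the form $(a'-q_nc')^{\pm1}$ and $(cq_n+d)^{\pm1}$ cancel against their neighbours; all such inner factors then telescope and one is left with
$$
Q(\tilde q_1,\tilde q_2,\tilde q_3,\tilde q_4) = (cq_2+d)\,Q(q_1,q_2,q_3,q_4)\,(cq_2+d)^{-1}.
$$
Thus the cross-ratio is merely conjugated by a non-zero quaternion, and Lemma~\ref{conj} then gives $R_{\BB H}(\tilde q_1,\tilde q_2,\tilde q_3,\tilde q_4)=R_{\BB H}(q_1,q_2,q_3,q_4)$. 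This identity has been derived assuming that no $q_n$ and no $\tilde q_n$ is $\infty$ (so that the anchoring factor $cq_2+d$ is finite and non-zero), but that is a dense open condition on the configuration for fixed $\gamma$; since $R_{\BB H}$ extends continuously and the cross-ratio is defined at $\infty$ by continuity, the equality $R_{\BB H}(\tilde q_1,\dots,\tilde q_4)=R_{\BB H}(q_1,\dots,q_4)$ persists in all cases.

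For sufficiency, suppose $R_{\BB H}(q_1,q_2,q_3,q_4)=R_{\BB H}(q'_1,q'_2,q'_3,q'_4)$. By the Corollary to Lemma~\ref{01infty} pick a fractional linear transformation $S$ with $S(q_n)=q'_n$ for $n=1,2,3$, and set $q''_4=S(q_4)$; by necessity $R_{\BB H}(q'_1,q'_2,q'_3,q''_4)=R_{\BB H}(q_1,q_2,q_3,q_4)=R_{\BB H}(q'_1,q'_2,q'_3,q'_4)$. Applying $T_{q'_1,q'_2,q'_3}$, which sends $q'_1,q'_2,q'_3$ to $0,1,\infty$, and writing $a=T_{q'_1,q'_2,q'_3}(q''_4)$, $b=T_{q'_1,q'_2,q'_3}(q'_4)$, necessity together with (\ref{q}) yields
$$
(|a|,\re a)=R_{\BB H}(0,1,\infty,a)=R_{\BB H}(q'_1,q'_2,q'_3,q''_4)=R_{\BB H}(q'_1,q'_2,q'_3,q'_4)=R_{\BB H}(0,1,\infty,b)=(|b|,\re b),
$$
so $|a|=|b|$ and $\re a=\re b$. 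By Lemma~\ref{conj} there is $g\in\BB H^{\times}$ with $b=gag^{-1}$; the conjugation $U(q)=gqg^{-1}$ is a fractional linear transformation (it is $\pi$ of $\begin{pmatrix} g & 0 \\ 0 & g \end{pmatrix}$), it fixes $0,1,\infty$ by Lemma~\ref{conj-transf}, and $U(a)=b$. Then
$$
T=(T_{q'_1,q'_2,q'_3})^{-1}\circ U\circ T_{q'_1,q'_2,q'_3}\circ S
$$
sends $q_n$ to $q'_n$ for every $n=1,2,3,4$, as one checks by tracing each point through the four maps.

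I expect the necessity step to be the main obstacle: one must get the telescoping in the Lemma~\ref{Lemma 10} computation exactly right — there are two admissible expressions for each difference and only one choice per slot makes the product collapse — and then justify the passage to the degenerate configurations involving $\infty$, where the limiting definition of the cross-ratio has to be invoked with care. Once necessity is in hand, sufficiency is essentially bookkeeping built on Lemma~\ref{01infty}, (\ref{q}), Lemma~\ref{conj} and Lemma~\ref{conj-transf}.
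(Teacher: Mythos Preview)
Your proposal is correct and follows essentially the same route as the paper: necessity via the Lemma~\ref{Lemma 10} telescoping identity $Q(\tilde q_1,\tilde q_2,\tilde q_3,\tilde q_4)=(cq_2+d)\,Q(q_1,q_2,q_3,q_4)\,(cq_2+d)^{-1}$ followed by Lemma~\ref{conj}, and sufficiency by reducing (via Lemma~\ref{01infty}) to the situation $0,1,\infty$ and then invoking Lemmas~\ref{conj} and~\ref{conj-transf} to supply the conjugation. The only cosmetic differences are that the paper performs the reduction as a single ``without loss of generality'' step rather than your explicit composition $(T_{q'_1,q'_2,q'_3})^{-1}\circ U\circ T_{q'_1,q'_2,q'_3}\circ S$, and does not spell out the limiting argument for configurations touching~$\infty$.
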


\begin{proof}
First we show that $R_{\BB H}(q_1, q_2, q_3, q_4)$ is invariant under fractional
linear transformations.
Pick any $\gamma= \begin{pmatrix} a & b \\ c & d \end{pmatrix} \in GL(2,\BB H)$,
let $T(q)=(aq+b)(cq+d)^{-1}$ be the corresponding fractional linear
transformation and write
$\gamma^{-1} = \begin{pmatrix} a' & b' \\ c' & d' \end{pmatrix}$.
By Lemma \ref{Lemma 10},
\begin{multline} \label{Q-conj}
Q \bigl( T(q_1), T(q_2), T(q_3), T(q_4) \bigr) \\
%= (cq_2+d)(q_2 - q_1)^{-1} (a'-q_1c')  (a'-q_1c')^{-1}(q_4 - q_1)(cq_4+d)^{-1}
% (cq_4+d)(q_4 - q_3)^{-1} (a'-q_3c')  (a'-q_2c')^{-1} (q_2 - q_3)(cq_1+d)^{-1}
% \\
= (cq_2+d) (q_2 - q_1)^{-1} (q_4 - q_1)(q_4 - q_3)^{-1}(q_2 - q_3)
(cq_2+d)^{-1} \\
= (cq_2+d) Q(q_1, q_2, q_3, q_4) (cq_2+d)^{-1},
\end{multline}
and it follows from Lemma \ref{conj} that
$$
R_{\BB H} \bigl( T(q_1), T(q_2), T(q_3), T(q_4) \bigr)
= R_{\BB H}(q_1, q_2, q_3, q_4).
$$

Conversely, suppose that 
$R_{\BB H}(q_1, q_2, q_3, q_4) = R_{\BB H}(q'_1, q'_2, q'_3, q'_4)$.
By Lemma \ref{01infty}, without loss of generality we can assume that
$q_1=q'_1=0$, $q_2=q'_2=1$, $q_3=q'_3=\infty$.
Then by (\ref{q}) we have
$$
|q_4|=|q'_4|, \qquad \re q_4 = \re q'_4.
$$
It follows from Lemmas \ref{conj} and \ref{conj-transf} that there exists
a fractional linear transformation $T: q \mapsto aqa^{-1}$ such that
$T(0)=0$, $T(1)=1$, $T(\infty)=\infty$ and $T(q)=q'$.
\end{proof}

\begin{rem}  \label{igor}
One can generate more invariants of fractional linear transformations as
follows. For an even number of points $q_1,\dots, q_{2n} \in \BB H$, define
$$
\tilde Q(q_1,\dots,q_{2n}) =
(q_1-q_2)(q_2-q_3)^{-1}(q_3-q_4)(q_4-q_5)^{-1} \dots (q_{2n-1}-q_{2n})(q_{2n}-q_1)^{-1}.
$$
Then it follows from Lemma \ref{igor} that
$$
|\tilde Q(q_1,\dots,q_{2n})| =
\bigl|\tilde Q\bigl(T(q_1),\dots,T(q_{2n})\bigr)\bigr|, \qquad
\re \tilde Q(q_1,\dots,q_{2n}) =
\re \tilde Q\bigl(T(q_1),\dots,T(q_{2n})\bigr)
$$
for all fractional linear transformations $T$.
One can even set some of these points equal to each other.
For example, choose $n=3$ and set $q_3=q_6$, then the norm and the real part of
$$
\tilde Q(q_1,q_2,q_3,q_4,q_5) =
(q_1-q_2)(q_2-q_3)^{-1}(q_3-q_4)(q_4-q_5)^{-1}(q_5-q_3)(q_3-q_1)^{-1}.
$$
remain unchanged under fractional linear transformations.
\end{rem}

Note that specifying the values of a fractional linear transformation $T$ at
four points never determines the transformation uniquely.
This is because any four points in $\BB H$ lie on a 2-sphere or a 2-plane.
Any 2-sphere or 2-plane in $\BB H$ can be transformed by a fractional
linear transformation into the unit imaginary sphere
$$
\Theta = \{ q \in \BB H ;\: |q|=1,\: \re q=0 \}.
$$
So, without loss of generality we can assume that $T$ fixes four points on
$\Theta$.
One can show that the set of all fractional linear transformations
fixing $\Theta$ consists of the rotation matrices:
$$
\{ \gamma \in GL(2,\BB H);\: \pi(\gamma)(q)=q \: \forall q \in \Theta \}
= \biggl\{ \gamma_{\theta} = \begin{pmatrix} \cos\theta & -\sin\theta \\
\sin\theta & \cos\theta \end{pmatrix} \in GL(2,\BB H);\:
\theta \in \BB R \biggr\}.
$$
Finally, composing $T$ with $\pi(\gamma_{\theta})$ results in new fractional
linear transformations that have the same values at the selected four points.
In Proposition \ref{unique} we will give conditions that
determine a fractional linear transformation uniquely.

\section{Properties of the Quaternionic Cross-Ratio}

In this section we discuss some properties of the quaternionic
cross-ratio and fractional linear transformations on $\BB H$.
First we fix the images of three points in $\BB H$ and geometrically
characterize all possible images of a fourth point under fractional
linear transformations.

\begin{prop}  \label{4points}
Let $q_1,q_2,q_3, q_4 \in \widehat{\BB H}$ be four distinct points, and
let $q'_1,q'_2, q'_3 \in \widehat{\BB H}$ be three distinct points.
Then the set
\begin{equation}  \label{2sphere}
S = \{ \pi(\gamma)(q_4) ;\: \gamma \in GL(2,\BB H),\:
\pi(\gamma)(q_n)=q'_n,\:n=1,2,3 \} \quad \subset \widehat{\BB H}
\end{equation}
is either a 2-sphere, a 2-plane or a single point.
The degenerate case when this set is a point happens if and only if
$Q(q_1,q_2,q_3,q_4)$ is real.
\end{prop}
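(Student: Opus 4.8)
The plan is to reduce to a normalized case using Lemma \ref{01infty} and then identify the set $S$ explicitly via the cross-ratio. First I would apply fractional linear transformations to both the source and target triples so that $q_1=q_2=q_3$ gets replaced by $0,1,\infty$ and $q'_1,q'_2,q'_3$ get replaced by $0,1,\infty$ as well; since precomposing and postcomposing with fixed fractional linear transformations does not change the ``orbit'' structure of $q_4$ (it just applies a fixed transformation to $S$, which carries 2-spheres to 2-spheres, 2-planes to 2-planes, and points to points — this is exactly Theorem \ref{spheres-thm}, or can be checked directly), we may assume $q_1=q'_1=0$, $q_2=q'_2=1$, $q_3=q'_3=\infty$. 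In this normalized situation, equation (\ref{q}) gives $Q(0,1,\infty,q_4)=q_4$, so write $q_4=q$ with $q\notin\{0,1,\infty\}$.

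Next I would compute $S$ in the normalized case. A transformation $\pi(\gamma)$ fixing $0$, $1$, $\infty$ is, by Lemma \ref{conj-transf}, exactly of the form $p\mapsto apa^{-1}$ for some $a\in\BB H^\times$. Hence
$$
S = \{\, a q a^{-1} ;\: a \in \BB H^\times \,\}.
$$
By Lemma \ref{conj}, this set is precisely $\{p\in\BB H ;\: |p|=|q|,\ \re p = \re q\}$. Now I split into cases according to whether $\im q = 0$. If $\im q = 0$, i.e. $q$ is real, then the only $p$ with $|p|=|q|$ and $\re p=\re q$ is $p=q$ itself, so $S=\{q\}$ is a single point; and this is exactly the case $Q(q_1,q_2,q_3,q_4)=q_4=q$ real, matching the degenerate statement. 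If $\im q\neq 0$, then the set $\{p ;\: \re p = \re q,\ |\im p| = |\im q|\}$ is a round 2-sphere of radius $|\im q|>0$ sitting inside the 3-dimensional affine hyperplane $\{\re p = \re q\}$ — i.e. a genuine 2-sphere in $\BB H=\BB R^4$. (The ``2-plane'' alternative in the statement only appears after we undo the normalization: a fractional linear transformation can send a 2-sphere through a point to a 2-plane, depending on where the point $\infty$ lands.)

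Finally I would assemble the pieces: in the normalized case $S$ is a 2-sphere when $q_4$ is non-real and a point when $q_4$ is real. Undoing the normalization applies a fixed fractional linear transformation to $S$; by Theorem \ref{spheres-thm} (map spheres/affine subspaces of dimension $2$ to spheres/affine subspaces of dimension $2$) this turns the 2-sphere into either a 2-sphere or a 2-plane, and turns the point into a point. The degeneracy criterion is invariant too: $Q(q_1,q_2,q_3,q_4)$ is a fixed conjugate of $q_4$ (up to the $(cq_2+d)$-conjugation of (\ref{Q-conj})) — more precisely, by Theorem \ref{main} the real part and norm of the cross-ratio are unchanged by the normalization, and ``$q_4$ real'' in the normalized picture is equivalent to ``$Q(q_1,q_2,q_3,q_4)$ has $|\im Q|=0$'', i.e. $Q$ real. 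The main obstacle I anticipate is the bookkeeping of the normalization step: one must be careful that the reduction via Lemma \ref{01infty} genuinely transports $S$ by a single fractional linear transformation (so that Theorem \ref{spheres-thm} applies cleanly) and that the condition ``$Q(q_1,q_2,q_3,q_4)$ real'' is genuinely preserved under this reduction — both follow from the invariance in Theorem \ref{main} and equation (\ref{Q-conj}), but they deserve an explicit sentence rather than being swept under the rug.
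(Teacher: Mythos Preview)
Your argument is tidier than the paper's, but as written it is circular within this paper's logical order: Theorem \ref{spheres-thm} is proved \emph{using} Proposition \ref{4points} (its proof literally says ``by the above proposition''), so you cannot invoke it to undo the normalization here. Your parenthetical ``or can be checked directly'' is where the actual work hides; to make your route stand on its own you would need an independent proof that fractional linear transformations carry 2-spheres and 2-planes to 2-spheres and 2-planes (e.g.\ the generator-by-generator argument of \cite{BG} that the paper mentions after Theorem \ref{spheres-thm}).

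The paper avoids this trap by working in the opposite order. It keeps $q'_1,q'_2,q'_3$ general and uses Theorem \ref{main} to rewrite ``$s\in S$'' as the two conditions $|Q(q'_1,q'_2,q'_3,s)|=|Q|$ and $\re Q(q'_1,q'_2,q'_3,s)=\re Q$. By explicit coordinate expansion and completing the square, each of these is shown to cut out a 3-sphere, a 3-plane, or a point in $\BB H$; hence $S$ is a non-empty intersection of two such loci and so is a 3-sphere, 3-plane, 2-sphere, 2-plane, or a point. Only then is the normalization $q_1=q'_1=0$, $q_2=q'_2=1$, $q_3=q'_3=\infty$ invoked, using merely that fractional linear transformations are diffeomorphisms (hence preserve dimension) --- a fact available without Theorem \ref{spheres-thm} --- to rule out the 3-dimensional possibilities. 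Your identification of the normalized $S$ with the conjugacy class $\{aq_4a^{-1}:a\in\BB H^\times\}$ and your handling of the degenerate case are exactly what the paper does at this final step, and those parts are fine.
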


\begin{proof}
To simplify notations, we write $Q$ for $Q(q_1,q_2,q_3,q_4)$.
By Theorem \ref{main}, $s \in S$ if and only if
\begin{align}
\bigl| (q'_2 - q'_1)^{-1} (s - q'_1)(s - q'_3)^{-1}(q'_2 - q'_3) \bigr| &= |Q|
\qquad \text{and}  \label{norm}  \\
\re \bigl( (q'_2 - q'_1)^{-1} (s - q'_1)(s - q'_3)^{-1}(q'_2 - q'_3) \bigr)
&= \re Q.  \label{trace}
\end{align}
Expanding (\ref{norm}) and using the properties of the norm gives
$$
|Q|^2 = \frac{|q'_2-q'_3|^2 \cdot |s-q'_1|^2}{|q'_2-q'_1|^2 \cdot |s-q'_3|^2}
\qquad \Longleftrightarrow \qquad
|s - q'_1|^2 = |Q|^2 \frac{|q'_2 - q'_1|^2 \cdot |s - q'_3|^2}{ |q'_2 - q'_3|^2}.
$$
Write
\begin{align*}
s &= t + xi + yj + zk, \\
q'_1 &= a_0 + a_1 i + a_2 j + a_3 k, \\
q'_3 &= b_0 + b_1 i + b_2 j + b_3 k, \\
A &= |Q|^2 \frac{|q'_2 - q'_1|^2}{ |q'_2 - q'_3|^2}.
\end{align*}
Then the above equation becomes
$$
(t - a_0)^2 + (x-a_1)^2 + (y - a_2)^2 + (z - a_3)^2
= A \bigl( (t - b_0)^2 + (x-b_1)^2 + (y - b_2)^2 + (z - b_3)^2 \bigr).
$$
If $A=1$, this is the equation of a 3-plane.
Otherwise, expanding and completing the square for each
variable gives an equation of the form
$$
(t - c_0)^2 + (x-c_1)^2 + (y-c_2)^2 + (z-c_3)^2 = B
$$
for some $c_0, c_1, c_2, c_3, B \in \BB R$.
Since this equation has at least one solution, $B \ge 0$ and
this is the equation of either a 3-sphere in $\BB H$ or a single point.

Consider now the condition (\ref{trace}). We can rewrite it as follows:
$$
\re \bigl( (q'_2 - q'_3)(q'_2 - q'_1)^{-1} (s - q'_1)(s - q'_3)^{-1} \bigr)
= \re Q.
$$
Write
\begin{align*}
\alpha = (q'_2 - q'_3)(q'_2 - q'_1)^{-1}
&= \alpha_0+\alpha_1i+\alpha_2j+\alpha_3k,  \\
\beta = q'_3-q'_1 &= \beta_0+\beta_1i+\beta_2j+\beta_3k, \\
u = s - q'_3 &=  t + xi + yj + zk.
\end{align*}
Then (\ref{trace}) becomes
$$
\re Q = \re\bigl( \alpha (u+\beta)u^{-1} \bigr)
= \re\alpha + |u|^{-2} \re(\alpha\beta \bar u )
= \alpha_0 + |u|^{-2} (\delta_0t + \delta_1x + \delta_2y + \delta_3z)
$$
for some real numbers $\delta_0,\delta_1,\delta_2,\delta_3$
that can be expressed in terms of $\alpha_n$'s and $\beta_m$'s.
Multiplying through by $|u|^2$ gives
$$
(\re Q -\alpha_0)(t^2+x^2+y^2+z^2)
- (\delta_0t + \delta_1x + \delta_2y + \delta_3z) =0.
$$
Once again, if $\re Q -\alpha_0=0$, this is the equation for a 3-plane.
Otherwise, completing the square for each variable gives
\begin{equation*} 
(t-d_0)^2 + (x-d_1)^2 + (y-d_2)^2 + (z-d_3)^2 = D.
\end{equation*}
Since this equation has at least one solution, $D \ge 0$ and
this is the equation of either a 3-sphere in $\BB H$ or a single point.

Thus, the set $S$ can be realized as an intersection of two
sets, each of which is either a 3-sphere, a 3-plane or a point.
Since $S$ is non-empty, this implies that $S$ is either 
a 3-sphere, a 3-plane, a 2-sphere, a 2-plane or a point.
Since fractional linear transformations are diffeomorphisms,
they preserve the dimensions of submanifolds.
Hence, to pin down the dimension of $S$, we can use Lemma \ref{01infty}
and assume without loss of generality that $q_1=q'_1=0$, $q_2=q'_2=1$,
$q_3=q'_3=\infty$.
Then by Lemmas \ref{conj} and \ref{conj-transf}, the set $S$ is either
2-dimensional or a single point, and the latter case happens if and only if
$aQ(q_1,q_2,q_3,q_4)a^{-1} = Q(q_1,q_2,q_3,q_4)$ for all $a \in \BB H^{\times}$,
i.e. if and only if $Q(q_1,q_2,q_3,q_4)$ is real.
\end{proof}

Recall that fractional linear transformations over $\BB C$ map
circles and straight lines into circles and straight lines.
As a consequence of the above proposition we obtain:

\begin{thm}  \label{spheres-thm}
Fractional linear transformations on $\widehat{\BB H}$ send circles and lines
into circles and lines, 2-spheres and 2-planes into 2-spheres and 2-planes,
3-spheres and 3-planes into 3-spheres and 3-planes. 
\end{thm}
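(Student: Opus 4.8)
The plan is to reduce the statement to its codimension-one instance — that a fractional linear transformation carries every $3$-sphere or $3$-plane in $\widehat{\BB H}$ to a $3$-sphere or $3$-plane — and then to recover the circle/line and $2$-sphere/$2$-plane cases by writing these as intersections of $3$-spheres and $3$-planes. The engine for the codimension-one case is that every $3$-sphere or $3$-plane in $\widehat{\BB H}$ is a level set of the function $s\mapsto|Q(r_1,r_2,r_3,s)|$ for a suitable triple $r_1,r_2,r_3$, after which it can be transported by a fractional linear transformation using only the invariance already proved in Theorem \ref{main} together with a computation already carried out inside the proof of Proposition \ref{4points}.

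Concretely, for distinct finite $r_1,r_2,r_3\in\BB H$ and $\rho>0$, the definition of the cross-ratio and multiplicativity of the norm give
$$
|Q(r_1,r_2,r_3,s)|=\frac{|s-r_1|\,|r_2-r_3|}{|r_2-r_1|\,|s-r_3|},
\qquad\text{hence}\qquad
\bigl\{\,s\in\widehat{\BB H}:|Q(r_1,r_2,r_3,s)|=\rho\,\bigr\}=\bigl\{\,s:|s-r_1|=c\,|s-r_3|\,\bigr\},
$$
where $c=\rho\,|r_2-r_1|/|r_2-r_3|$ (and $\infty$ lies in this set exactly when $c=1$). When $c=1$ the set is the perpendicular bisector hyperplane of the segment $[r_1,r_3]$ together with $\infty$, and every $3$-plane is obtained this way; when $c\neq1$ it is an Apollonius $3$-sphere for which $r_1,r_3$ are a pair of inverse points, and — given a $3$-sphere $P$, choosing $r_3$ off $P$, then $r_1$ the inverse of $r_3$ in $P$, then $r_2$ generic — every $3$-sphere is obtained this way. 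Since all these choices may be perturbed, we may further arrange that $r_1,r_2,r_3$ are finite, distinct, lie off $P$, and lie off $T^{-1}(\infty)$. Then $T(P)=\{\,T(s):|Q(r_1,r_2,r_3,s)|=\rho\,\}=\{\,s':|Q(r_1,r_2,r_3,T^{-1}(s'))|=\rho\,\}$, and by the invariance of $R_{\BB H}$ from Theorem \ref{main} we have $|Q(r_1,r_2,r_3,T^{-1}(s'))|=|Q(T(r_1),T(r_2),T(r_3),s')|$, where $T(r_1),T(r_2),T(r_3)$ are finite and distinct. By the very computation used to analyze condition (\ref{norm}) in the proof of Proposition \ref{4points}, the set $\{\,s':|Q(T(r_1),T(r_2),T(r_3),s')|=\rho\,\}$ is a $3$-sphere, a $3$-plane, a point, or empty; it equals $T(P)$, which is non-empty and, being a diffeomorphic image of $P$, is $3$-dimensional, so it is a $3$-sphere or a $3$-plane.

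For the remaining cases, note that every $k$-sphere or $k$-plane $\Sigma\subset\widehat{\BB H}$ with $k\in\{1,2,3\}$ is an intersection of $4-k$ distinct $3$-spheres and $3$-planes: a $k$-plane is the intersection of $4-k$ hyperplanes, while a $k$-sphere spans a $(k+1)$-plane $A$ (itself an intersection of $3-k$ hyperplanes) and equals $A\cap B$ with $B$ the $3$-sphere concentric with $\Sigma$ of the same radius. Writing $\Sigma=P_1\cap\dots\cap P_{4-k}$ and using that $T$ is a bijection, $T(\Sigma)=T(P_1)\cap\dots\cap T(P_{4-k})$ is an intersection of $3$-spheres and $3$-planes by the codimension-one case. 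Finally, any intersection of finitely many $3$-spheres and $3$-planes in $\widehat{\BB H}$ is empty, at most two points, or a $d$-sphere or $d$-plane with $d\geq1$ — replacing a pair of $3$-spheres by one of them together with the hyperplane obtained by differencing their equations reduces the intersection to that of at most one $3$-sphere with a family of $3$-planes. Since $T(\Sigma)$ is diffeomorphic to $\Sigma$ it has dimension $k\geq1$, so $T(\Sigma)$ is a $k$-sphere or a $k$-plane, which is the assertion of the theorem.

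The main obstacle is the codimension-one step, and within it the realization of an arbitrary $3$-sphere or $3$-plane as a level set of $|Q(r_1,r_2,r_3,\cdot)|$: this rests on the classical fact that every sphere is an Apollonius sphere for a suitable pair of inverse points and every hyperplane a perpendicular bisector, together with the bookkeeping needed to keep $r_1,r_2,r_3$ finite, distinct, off $P$, and off $T^{-1}(\infty)$, and the routine verification that $|Q|$ (and $\re Q$) extend continuously to $\widehat{\BB H}$, so that Theorem \ref{main} applies verbatim when some argument is $\infty$. Everything else is soft: fractional linear transformations are diffeomorphisms and hence preserve dimension, and a finite intersection of spheres and affine subspaces is again a sphere or an affine subspace.
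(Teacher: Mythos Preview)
Your argument is correct. The codimension-one step is identical to the paper's treatment of $3$-spheres and $3$-planes: both realize such a set as a level set $\{\,s:|Q(r_1,r_2,r_3,s)|=\rho\,\}$, transport it by $T$ using the invariance of $|Q|$ from Theorem~\ref{main}, and invoke the computation behind condition~(\ref{norm}) in Proposition~\ref{4points} to identify the image.

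Where you diverge is in the lower-dimensional cases. The paper handles $2$-spheres and $2$-planes \emph{first}, by recognizing any such set as one of the sets $S$ of (\ref{2sphere}) (after an affine change of variables), so that Proposition~\ref{4points} applied to $T(S)$ immediately gives a $2$-sphere or $2$-plane; circles and lines then follow as intersections of two $2$-spheres or $2$-planes, and the $3$-dimensional case is done last. You instead do the $3$-dimensional case first and obtain both remaining cases uniformly as intersections of $3$-spheres and $3$-planes. Your route uses only the $|Q|$-half of Proposition~\ref{4points} (the analysis of (\ref{norm})) and never the $\re Q$-half, which is economical; the cost is the extra lemma that a finite intersection of $3$-spheres and $3$-planes is again a sphere or an affine subspace (or a set of at most two points), which you supply by the standard ``replace two sphere equations by one sphere and one hyperplane'' trick. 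The paper's route avoids that lemma by letting Proposition~\ref{4points} do the work for $2$-spheres directly. Both are short; yours is slightly more self-contained, while the paper's exploits Proposition~\ref{4points} more fully.
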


\begin{proof}
Consider first a 2-sphere or a 2-plane $S$.
Note that any 2-sphere (respectively 2-plane) in $\BB H$
can be transformed into any other 2-sphere (respectively 2-plane) by
a transformation of the type $q \mapsto aq+b$, $a,b \in \BB H$.
Hence we can assume that our set $S$ can be realized as (\ref{2sphere})
for some choice of $q_1,q_2,q_3,q_4 \in \widehat{\BB H}$.
Now, let $T$ be any fractional linear transformation. Then
$$
T(S) = \{ \pi(\gamma)(q_4) ;\: \gamma \in GL(2,\BB H),\:
\pi(\gamma)(q_n)=T(q'_n),\:n=1,2,3 \},
$$
which by the above proposition is either a 2-sphere or a 2-plane.

Now, let $C \subset \widehat{\BB H}$ be a circle or a line.
Any circle or line can be expressed as an intersection
$C = S_1 \cap S_2$, where $S_1$ and $S_2$ are 2-spheres or 2-planes.
Then $T(C) = T(S_1) \cap T(S_2)$ is the intersection of 2-spheres and/or
2-planes, so is also either a circle or a line.

Finally, let $R$ be a 3-sphere or a 3-plane.
Applying a linear transformation $q \mapsto aq+b$ as above and using the
characterization of the set of points satisfying $|Q(q_1,q_2,q_3,q)|=const$
given in the proof of Proposition \ref{4points}, we can realize $R$ as
$$
R = \{ q \in \BB H ;\: |Q(q_1,q_2,q_3,q)|=N \}
$$
for some choice of $q_1,q_2,q_3 \in \widehat{\BB H}$ and $N \in (0,\infty)$.
Then
$$
T(R) = \bigl\{ q \in \BB H ;\:
\bigl|Q \bigl(T(q_1),T(q_2),T(q_3),q \bigr)\bigr|=N \bigr\},
$$
which is a 3-sphere or a 3-plane as well.
\end{proof}  

Theorem \ref{spheres-thm}  was originally proved in \cite{BG},
but their proof did not use the quaternionic cross-ratio.
Instead, they checked that 3-spheres and 3-planes get mapped into
3-spheres and 3-planes by the generators of the group of fractional linear
transformations $\{ \pi(\gamma) ;\: \gamma \in GL(2,\BB H)\}$, namely the
translations, rotations, dilations and the inversion $q \mapsto q^{-1}$.
Then they realized 2-spheres, 2-planes, circles and lines as finite
intersections of 3-spheres and 3-planes.

Recall that, over complex numbers $\BB C$, the cross-ratio (\ref{C-ratio}) is
real if and only if the four points $z_1,z_2,z_3,z_4 \in \BB C$ lie on a single
circle or a single line. This result also carries over to $\BB H$: 

\begin{prop}  \label{real}
Let $q_1,q_2,q_3,q_4 \in \widehat{\BB H}$.
Then $Q(q_1, q_2, q_3, q_4)$ is real if and only if $q_1,q_2,q_3,q_4$ lie on a
single circle or a single line. 
\end{prop}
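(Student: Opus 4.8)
The plan is to reduce, via Lemma \ref{01infty}, to the normalized quadruple $(0,1,\infty,q)$, where both sides of the claimed equivalence become transparent.

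First I would check that both conditions in the statement are invariant under fractional linear transformations. A quaternion $q$ is real exactly when $|q|^2=(\re q)^2$, so ``$Q(q_1,q_2,q_3,q_4)\in\BB R$'' is a condition depending only on $R_{\BB H}(q_1,q_2,q_3,q_4)$, and Theorem \ref{main} shows this quantity is preserved by every fractional linear transformation. (Equivalently, the identity (\ref{Q-conj}) shows $Q$ gets transformed by an inner automorphism of $\BB H$, and inner automorphisms both preserve and reflect membership in the center $\BB R$.) Likewise, by Theorem \ref{spheres-thm} the property ``$q_1,q_2,q_3,q_4$ lie on a single circle or line'' is invariant under fractional linear transformations, since such transformations and their inverses carry circles and lines to circles and lines. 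Hence, applying $T_{q_1,q_2,q_3}$ from Lemma \ref{01infty} and writing $q=T_{q_1,q_2,q_3}(q_4)$, it suffices to prove the equivalence for $(0,1,\infty,q)$; here $q\in\BB H$ is a point different from $0$ and $1$, and $q\neq\infty$ because $T_{q_1,q_2,q_3}$ is a bijection carrying $q_3\neq q_4$ to $\infty$.

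Next, by (\ref{q}) we have $Q(0,1,\infty,q)=q$, so the left-hand condition is simply $q\in\BB R$. For the right-hand condition I would note that the unique circle or line in $\widehat{\BB H}$ through $0$, $1$ and $\infty$ is the real axis $\BB R\cup\{\infty\}$: a circle or line containing $\infty$ must be a line, and the only line through the distinct points $0$ and $1$ is $\BB R$. Therefore $0,1,\infty,q$ lie on a common circle or line if and only if $q\in\BB R$. Combining the two observations, both sides reduce to the single condition $q\in\BB R$, which completes the proof.

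I do not anticipate a real obstacle; the only points needing care are confirming that realness of $Q$ is genuinely a function of $R_{\BB H}$ (through $|Q|^2=(\re Q)^2$), so that Theorem \ref{main} applies, and the elementary geometric fact that $\BB R\cup\{\infty\}$ is the unique circle-or-line through $\{0,1,\infty\}$, together with the minor bookkeeping when one of the original $q_n$ equals $\infty$.
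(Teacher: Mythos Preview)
Your proof is correct and follows essentially the same approach as the paper: reduce via $T_{q_1,q_2,q_3}$ to the configuration $(0,1,\infty,q)$, invoke Theorem~\ref{spheres-thm} for the geometric side, and observe that both conditions collapse to $q\in\BB R$. The only cosmetic difference is that the paper bypasses your separate invariance check for ``$Q\in\BB R$'' by noting directly that $T_{q_1,q_2,q_3}(q_4)=Q(q_1,q_2,q_3,q_4)$ from the explicit formulas, so no appeal to Theorem~\ref{main} or (\ref{Q-conj}) is needed.
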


\begin{proof}
Let $T$ denote the fractional linear transformation $T_{q_1,q_2,q_3}$ from
Lemma \ref{01infty}. Then, by Theorem \ref{spheres-thm},
$q_1,q_2,q_3,q_4$ lie on a single circle or a line if and only if the points
$0,1,\infty,T(q_4)$ do, i.e. if and only if $T(q_4)$ is real. But, $T(q_4)$ is exactly the quaternion $Q(q_1, q_2, q_3, q_4)$. 
\end{proof}

Recall from Proposition \ref{4points} that the set $S$ defined by
(\ref{2sphere}) degenerates into a single point if and only if the cross-ratio
$Q(q_1,q_2,q_3,q_4)$ is real. Now we know that this happens if and only if
the points $q_1,q_2,q_3,q_4$ lie on single circle or a single line.

\section{Five Points and the Unique Determination of
Fractional Linear Transformations}  \label{5}

As it was explained after Remark \ref{igor}, specifying values of a
fractional linear transformation at four points never determines that
transformation uniquely.
We shall give a necessary and sufficient condition for the existence of a
fractional linear transformation with prescribed values at five different
points and discuss the uniqueness of such a transformation.

\begin{prop}  \label{main2}
Let $q_1, q_2, q_3, q_4, q_5 \in \widehat{\BB H}$ be five distinct points,
and let $q'_1, q'_2, q'_3, q'_4, q'_5 \in \widehat{\BB H}$ be another
collection of five distinct points.
Then there exists a fractional linear transformation $T$ such that
$T(q_n)=q'_n$, $n=1,2,3,4,5$, if and only if there exists an
$a \in \BB H^{\times}$ such that
$$
Q(q'_1,q'_2,q'_3,q'_4) = a Q(q_1,q_2,q_3,q_4) a^{-1}
\qquad \text{and} \qquad
Q(q'_1,q'_2,q'_3,q'_5) = a Q(q_1,q_2,q_3,q_5) a^{-1}.
$$ 
\end{prop}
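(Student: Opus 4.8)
The plan is to reduce to the normalized situation $q_1=q_1'=0$, $q_2=q_2'=1$, $q_3=q_3'=\infty$ using Lemma~\ref{01infty}, and then exploit Lemma~\ref{conj-transf} to identify the freedom in the transformation. Concretely, precompose and postcompose with the transformations $T_{q_1,q_2,q_3}$ and $(T_{q_1',q_2',q_3'})^{-1}$ so that the problem becomes: given points $\hat q_4, \hat q_5$ (the images of $q_4,q_5$ under $T_{q_1,q_2,q_3}$) and $\hat q_4', \hat q_5'$ (the images of $q_4',q_5'$ under $T_{q_1',q_2',q_3'}$), decide when there is a fractional linear transformation fixing $0,1,\infty$ and sending $\hat q_4 \mapsto \hat q_4'$, $\hat q_5 \mapsto \hat q_5'$. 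By Lemma~\ref{conj-transf} such a transformation is exactly a conjugation $q \mapsto aqa^{-1}$, so the condition is precisely that there exist $a \in \BB H^\times$ with $a \hat q_4 a^{-1} = \hat q_4'$ and $a \hat q_5 a^{-1} = \hat q_5'$.

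Next I would translate this conjugation condition back into the cross-ratio language. By equation~(\ref{q}) together with the computation~(\ref{Q-conj}) in the proof of Theorem~\ref{main}, we have $\hat q_4 = Q(q_1,q_2,q_3,q_4)$ and, since $T_{q_1,q_2,q_3}$ fixes the cross-ratio up to conjugation by a nonzero quaternion, the relation between $\hat q_4$ and $Q(q_1,q_2,q_3,q_4)$ is conjugation; likewise on the primed side $\hat q_4'$ is conjugate to $Q(q_1',q_2',q_3',q_4')$, and similarly for the index $5$. The key bookkeeping point is that the \emph{same} conjugating element works for both the fourth and fifth point on each side: namely, from Lemma~\ref{Lemma 10} the conjugating factor in~(\ref{Q-conj}) is $(cq_2+d)^{\pm1}$, which depends only on $q_1,q_2,q_3$ and not on the fourth point. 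Hence $\hat q_4 = g\,Q(q_1,q_2,q_3,q_4)\,g^{-1}$ and $\hat q_5 = g\,Q(q_1,q_2,q_3,q_5)\,g^{-1}$ for a single $g \in \BB H^\times$, and $\hat q_4' = g'\,Q(q_1',q_2',q_3',q_4')\,(g')^{-1}$, $\hat q_5' = g'\,Q(q_1',q_2',q_3',q_5')\,(g')^{-1}$ for a single $g' \in \BB H^\times$. Then the existence of $a$ with $a\hat q_n a^{-1} = \hat q_n'$ for $n=4,5$ is equivalent, after absorbing $g$ and $g'$, to the existence of $\tilde a = (g')^{-1}ag$ with $\tilde a\, Q(q_1,q_2,q_3,q_n)\,\tilde a^{-1} = Q(q_1',q_2',q_3',q_n')$ for $n=4,5$, which is exactly the stated condition.

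The remaining direction, that the existence of such an $a$ implies the existence of $T$, runs the same chain of equivalences backwards: produce the conjugation $q \mapsto \tilde a q \tilde a^{-1}$ on the normalized picture and conjugate it by $T_{q_1,q_2,q_3}$ and $(T_{q_1',q_2',q_3'})^{-1}$ to get the desired $T$ on $\widehat{\BB H}$. I would also record explicitly that the five points $q_n$ being distinct guarantees $q_1,q_2,q_3$ are distinct so that $T_{q_1,q_2,q_3}$ is defined, and that the images $\hat q_4, \hat q_5$ are distinct from each other and from $0,1,\infty$.

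I expect the main obstacle to be purely organizational rather than mathematical: keeping track of the three separate conjugating quaternions ($g$ from normalizing the source triple, $g'$ from normalizing the target triple, and the sought-after $a$) and verifying that they compose into a single element $\tilde a$ realizing the stated relations simultaneously for both $q_4$ and $q_5$. The one genuine content point to be careful about is the claim that in~(\ref{Q-conj}) the conjugating factor is independent of the fourth argument; this is immediate from the precise form of Lemma~\ref{Lemma 10}, since there the factor $(cq_2+d)$ depends only on the entries of $\gamma$ and on $q_2$, so applying it with $q_4$ or with $q_5$ in the last slot yields conjugation by the same element. Everything else is a formal consequence of Lemmas~\ref{01infty} and~\ref{conj-transf} and of Theorem~\ref{main}.
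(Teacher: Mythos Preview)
Your proposal is correct and follows essentially the same route as the paper: reduce via Lemma~\ref{01infty} to $q_1=q_1'=0$, $q_2=q_2'=1$, $q_3=q_3'=\infty$, invoke Lemma~\ref{conj-transf} to see that the remaining freedom is conjugation, and use~(\ref{Q-conj}) to check that the reduction conjugates $Q_4$ and $Q_5$ by the \emph{same} element. One simplification you can make: since by the explicit formula in the proof of Lemma~\ref{01infty} we have $T_{q_1,q_2,q_3}(q_4)=Q(q_1,q_2,q_3,q_4)$ literally, your conjugators $g$ and $g'$ are both $1$, and the bookkeeping with $g,g',\tilde a$ collapses---this is exactly how the paper's proof avoids that extra layer.
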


\begin{proof}
For simplicity write
$$
Q_4 = Q(q_1,q_2,q_3,q_4), \quad Q'_4 = Q(q'_1,q'_2,q'_3,q'_4), \quad
Q_5 = Q(q_1,q_2,q_3,q_5), \quad Q'_5 = Q(q'_1,q'_2,q'_3,q'_5).
$$
By (\ref{Q-conj}), if $T$ is a fractional linear transformation on
$\widehat{\BB H}$, replacing $q_n$ with $T(q_n)$, $n=1,2,3,4,5$,
results in conjugating $Q_4$ and $Q_5$ by the same quaternion.
Thus, by Lemma \ref{01infty}, we can assume without loss of generality that
$q_1=q'_1=0$, $q_2=q'_2=1$, $q_3=q'_3=\infty$.
Then, by (\ref{q}), we have $Q_4=q_4$, $Q'_4=q'_4$, $Q_5=q_5$ and $Q'_5=q'_5$.
On the other hand, by Lemma \ref{conj-transf}, any transformation $T$
such that $T(q_n)=q'_n$, $n=1,2,3$, has to be of the form $q \mapsto aqa^{-1}$
for some $a \in \BB H^{\times}$.
This proves that there exists a fractional linear transformation $T$ such that
$T(q_n)=q'_n$, $n=1,2,3,4,5$, if and only if there exists an
$a \in \BB H^{\times}$ such that $Q'_4=aQ_4a^{-1}$ and $Q'_5=aQ_5a^{-1}$.
\end{proof}

In light of the discussion preceding Lemma \ref{conj}, we can restate
Proposition \ref{main2} as follows.
Given five distinct points $q_1,q_2,q_3,q_4,q_5 \in \widehat{\BB H}$
and five more distinct points $q'_1,q'_2,q'_3,q'_4,q'_5 \in \widehat{\BB H}$,
there exists a fractional linear transformation $T$ such that
$T(q_n)=q'_n$, $n=1,2,3,4,5$, if and only if
\begin{enumerate}
\item
$|Q(q_1,q_2,q_3,q_4)| = |Q(q'_1,q'_2,q'_3,q'_4)|$ and
$|Q(q_1,q_2,q_3,q_5)| = |Q(q'_1,q'_2,q'_3,q'_5)|$;
\item
$\re Q(q_1,q_2,q_3,q_4) = \re Q(q'_1,q'_2,q'_3,q'_4)$ and
$\re Q(q_1,q_2,q_3,q_5) = \re Q(q'_1,q'_2,q'_3,q'_5)$;
\item
There is a single rotation of the 3-space consisting of
imaginary quaternions which takes
$$
\im Q(q_1,q_2,q_3,q_4) \mapsto \im Q(q'_1,q'_2,q'_3,q'_4) \quad \text{and} \quad
\im Q(q_1,q_2,q_3,q_5) \mapsto \im Q(q'_1,q'_2,q'_3,q'_5).
$$
\end{enumerate}
In general, if $p_1, p_2, p'_1, p'_2$ are points in $\BB R^3$, then there
exists a rotation $\rot \in SO(3)$ such that $\rot(p_1)=p'_1$ and
$\rot(p_2)=p'_2$ if and only if $|p_1|=|p'_1|$, $|p_2|=|p'_2|$ and
$|p_1-p_2|=|p'_1-p'_2|$.
Thus, in the presence of the first two conditions, the last condition is
satisfied if and only if
\begin{enumerate}
\item[$3'$.]
$\bigl| Q(q_1,q_2,q_3,q_4) - Q(q_1,q_2,q_3,q_5) \bigr|
= \bigl| Q(q'_1,q'_2,q'_3,q'_4) - Q(q'_1,q'_2,q'_3,q'_5) \bigr|$.
\end{enumerate}

Next we address the question of uniqueness of
fractional linear transformations on $\widehat{\BB H}$.

\begin{prop}  \label{unique}
Let $q_1,q_2,q_3,q_4,q_5 \in \widehat{\BB H}$ be five distinct points not
lying on a single 2-sphere or 2-plane. Then any fractional linear
transformation $T$ is uniquely determined by its values at these points,
$T(q_n)$, $n=1,2,3,4,5$.
\end{prop}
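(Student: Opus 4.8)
The plan is to argue by contradiction in the usual way: suppose $T_1$ and $T_2$ are fractional linear transformations with $T_1(q_n)=T_2(q_n)$ for $n=1,\dots,5$, and show that $T:=T_2^{-1}\circ T_1$, which fixes each $q_n$, must be the identity. First I would normalize: let $S=T_{q_1,q_2,q_3}$ be the transformation from Lemma \ref{01infty} and pass to $U:=S\circ T\circ S^{-1}$. Then $U$ fixes $0$, $1$, $\infty$ together with the two points $S(q_4)$ and $S(q_5)$, and $U$ is the identity if and only if $T$ is. By Lemma \ref{conj-transf}, $U(q)=aqa^{-1}$ for some $a\in\BB H^{\times}$, and the requirements $U(S(q_4))=S(q_4)$, $U(S(q_5))=S(q_5)$ say exactly that $a$ commutes with both $S(q_4)$ and $S(q_5)$.

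The crux is then Lemma \ref{commute}. If $a\in\BB R^{\times}$ we are done, since then $U$ is the identity. Otherwise $\im a\ne0$, and Lemma \ref{commute} forces $\im S(q_4)$ and $\im S(q_5)$ each to be a real multiple of $\im a$; hence $S(q_4)$ and $S(q_5)$ both lie on the $2$-plane $P=\{\,t+s\,\im a\;:\;s,t\in\BB R\,\}$. Since $0$ and $1$ also lie on $P$ and $S(q_3)=\infty$, all five points $S(q_1),\dots,S(q_5)$ lie on $P\cup\{\infty\}$, a $2$-plane in $\widehat{\BB H}$. Applying Theorem \ref{spheres-thm} to $S^{-1}$ (which carries $2$-planes to $2$-spheres or $2$-planes and maps $S(q_n)$ back to $q_n$) then shows that $q_1,\dots,q_5$ lie on a single $2$-sphere or $2$-plane, contradicting the hypothesis. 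Therefore $a\in\BB R^{\times}$, so $U$, and hence $T$, is the identity, i.e.\ $T_1=T_2$.

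I expect the algebra to be essentially immediate — it is just Lemmas \ref{conj-transf} and \ref{commute} — so the only point requiring care is the geometric bookkeeping in $\widehat{\BB H}$: recording that an affine $2$-plane together with the point $\infty$ counts as one of the excluded configurations, and invoking Theorem \ref{spheres-thm} to transport the degeneracy from the normalized points $S(q_n)$ back to the original points $q_n$. One should also note in passing that, since the $q_n$ are distinct and $S$ is injective, after normalization none of $S(q_4),S(q_5)$ equals $0,1$ or $\infty$, so the commutation relations above are between honest elements of $\BB H^{\times}$.
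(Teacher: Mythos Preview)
Your argument is correct and follows essentially the same route as the paper: normalize three of the points to $0,1,\infty$ using Lemma~\ref{01infty}, invoke Lemma~\ref{conj-transf} to see that the remaining freedom is a conjugation $q\mapsto aqa^{-1}$, and then use the hypothesis on the five points to force $a\in\BB R^{\times}$. The only cosmetic difference is that the paper phrases the last step via the $SO(3)$ picture (a rotation of the imaginary $3$-space is determined by its action on two linearly independent vectors $\im q_4$, $\im q_5$), whereas you phrase it algebraically via Lemma~\ref{commute} and a contradiction; these are the same argument in slightly different language.
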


\begin{proof}
By Lemma \ref{01infty} and Theorem \ref{spheres-thm}, without loss of
generality we can assume that
$q_1=T(q_1)=0$, $q_2=T(q_2)=1$ and $q_3=T(q_3)=\infty$.
Then by, Lemma \ref{conj-transf}, $T$ has to be of the form $q \mapsto aqa^{-1}$
for some $a \in \BB H^{\times}$. Hence $T$ preserves the real parts and is
effectively a rotation of the imaginary 3-space.
Since the points $q_1,q_2,q_3,q_4,q_5$ do not lie
on a single 2-plane, $\im q_4 \ne 0$, $\im q_5 \ne 0$ and
$\im q_5$ is not a real multiple of $\im q_4$.
Therefore, $\im q_4$ and $\im q_5$ do not lie on a single line through
the origin, and the rotation $T$ is uniquely determined by its values on
$q_4$ and $q_5$.
\end{proof}

To determine if five points lie on the same 2-sphere or a 2-plane
we can use the cross-ratio:

\begin{lem}  \label{sphere-crit}
Five different points $q_1,q_2,q_3,q_4,q_5 \in \widetilde{\BB H}$ lie on
a single 2-sphere or a 2-plane if and only if
$Q(q_1,q_2,q_3,q_4)$ and $Q(q_1,q_2,q_3,q_5)$ commute with each other.
\end{lem}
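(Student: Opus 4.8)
The plan is to normalize the first three points and then read the result off from Lemma~\ref{commute}. First I would apply the fractional linear transformation $T_{q_1,q_2,q_3}$ of Lemma~\ref{01infty}. By Theorem~\ref{spheres-thm} (applied to $T_{q_1,q_2,q_3}$ and to its inverse), the five points lie on a single 2-sphere or 2-plane if and only if their images do; and by formula~(\ref{Q-conj}) the two cross-ratios $Q(q_1,q_2,q_3,q_4)$ and $Q(q_1,q_2,q_3,q_5)$ are both transformed by conjugation by \emph{one and the same} quaternion $cq_2+d$ (the conjugating factor in~(\ref{Q-conj}) does not involve the fourth argument). Since conjugation by a fixed non-zero quaternion is an $\BB R$-algebra automorphism of $\BB H$, it preserves the relation ``$x$ and $y$ commute''. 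Hence it suffices to treat the case $q_1=0$, $q_2=1$, $q_3=\infty$; there $q_4,q_5\in\BB H$ are finite (being distinct from $q_3=\infty$), and by~(\ref{q}) we have $Q(0,1,\infty,q_4)=q_4$ and $Q(0,1,\infty,q_5)=q_5$. So the statement reduces to: the points $0,1,\infty,q_4,q_5$ lie on a single 2-sphere or 2-plane if and only if $q_4$ and $q_5$ commute.

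Next I would dispose of the 2-sphere alternative: a 2-sphere is a compact subset of $\BB H$ and so cannot contain $\infty$, hence the only possibility is a 2-plane, i.e. the one-point compactification of an affine 2-plane $P\subset\BB H$, and such a set contains $0,1,\infty,q_4,q_5$ exactly when the affine plane $P$ passes through $0,1,q_4,q_5$. A 2-plane through $0$ is a 2-dimensional linear subspace of $\BB H$, and since it also contains $1$ it has the form $P=\BB R\cdot 1\oplus\BB R v$ for some non-zero purely imaginary $v$ (one may take $v=\im w$ for any $w\in P\setminus\BB R\cdot 1$). A quaternion $q$ lies in this $P$ if and only if $\im q\in\BB R v$. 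Therefore $0,1,q_4,q_5$ lie on a common affine 2-plane precisely when there is a non-zero purely imaginary $v$ with $\im q_4,\im q_5\in\BB R v$; equivalently, when $\im q_4$ and $\im q_5$ are linearly dependent over $\BB R$; equivalently, when one of $\im q_4$, $\im q_5$ is a real multiple of the other (the cases $\im q_4=0$ or $\im q_5=0$ being immediate in both directions).

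Finally, Lemma~\ref{commute} states that $q_4$ and $q_5$ commute if and only if one of $\im q_4$, $\im q_5$ is a real multiple of the other --- exactly the condition produced in the previous paragraph. Combining the three steps proves the lemma. The one step that needs genuine care is the reduction, namely the observation that~(\ref{Q-conj}) conjugates $Q(q_1,q_2,q_3,q_4)$ and $Q(q_1,q_2,q_3,q_5)$ by the \emph{same} factor, so that commutativity is correctly transported; the remaining points (the shape of a 2-plane through $0$ and $1$, and the degenerate cases with vanishing imaginary parts) are routine.
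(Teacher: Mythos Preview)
Your argument is correct and follows essentially the same route as the paper: reduce via $T_{q_1,q_2,q_3}$ using Theorem~\ref{spheres-thm} and formula~(\ref{Q-conj}) (noting, as you do, that the conjugating factor $cq_2+d$ is independent of the fourth argument, so both cross-ratios are conjugated by the same quaternion), then read off the equivalence from Lemma~\ref{commute}. The paper's proof is terser at the final step --- it simply asserts that after normalization the five points lie on a single 2-plane iff one of $\im q_4$, $\im q_5$ is a real multiple of the other --- whereas you spell out why the 2-sphere alternative is excluded (compactness) and why a 2-plane through $0$ and $1$ has the form $\BB R\cdot 1\oplus\BB R v$; this extra detail is welcome but not a different approach.
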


\begin{proof}
As in the proof of Proposition \ref{main2}, let
$Q_4 = Q(q_1,q_2,q_3,q_4)$ and $Q_5 = Q(q_1,q_2,q_3,q_5)$.
By (\ref{Q-conj}), if $T$ is a fractional linear transformation on
$\widehat{\BB H}$, replacing $q_n$ with $T(q_n)$, $n=1,2,3,4,5$,
results in conjugating $Q_4$ and $Q_5$ by the same quaternion.
Thus by Lemma \ref{01infty} and Theorem \ref{spheres-thm},
we can assume without loss of generality that $q_1=0$, $q_2=1$, $q_3=\infty$.
By (\ref{q}), we have $Q_4=q_4$ and $Q_5=q_5$.
Then the points $q_1,q_2,q_3,q_4,q_5$ lie on a single 2-plane if and only if
one of $\im Q_4$, $\im Q_5$ is a real multiple of the other.
By Lemma \ref{commute}, this is equivalent to $Q_4$ and $Q_5$
commuting with each other.
\end{proof}

Finally, we fix the images of four points in $\BB H$ and characterize
all possible images of a fifth point under fractional linear transformations,
just like we did in Proposition \ref{4points} with four points.

\begin{prop}  \label{5points}
Let $q_1,q_2,q_3,q_4,q_5 \in \widehat{\BB H}$ be five distinct points, and
let $q'_1,q'_2,q'_3,q'_4 \in \widehat{\BB H}$ be four distinct points.
Assume that
\begin{equation}  \label{assumption}
R_{\BB H}(q_1, q_2, q_3, q_4) = R_{\BB H}(q'_1, q'_2, q'_3, q'_4)
\end{equation}
and that $\im Q(q_1, q_2, q_3, q_4) \ne 0$ (or, equivalently,
that the points $q_1,q_2,q_3,q_4$ do not lie on a single circle or a line).
Then the set
\begin{equation}
C = \{ \pi(\gamma)(q_5) ;\: \gamma \in GL(2,\BB H),\:
\pi(\gamma)(q_n)=q'_n,\:n=1,2,3,4 \} \quad \subset \widehat{\BB H}
\end{equation}
is either a circle, a line or a single point.
The degenerate case when this set is a point happens if and only if
$Q(q_1,q_2,q_3,q_4)$ and $Q(q_1,q_2,q_3,q_5)$ commute with each other.
\end{prop}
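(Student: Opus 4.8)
The plan is to strip the fractional linear transformations away until $C$ becomes a single conjugation orbit inside $\BB H$, and then read off the shape of that orbit by hand. First I would note that $C$ is non-empty: applying Theorem \ref{main} to the quadruples $(q_1,q_2,q_3,q_4)$ and $(q_1',q_2',q_3',q_4')$ turns hypothesis (\ref{assumption}) into the existence of at least one $\gamma$ with $\pi(\gamma)(q_n)=q_n'$ for $n=1,2,3,4$. Next, imitating the proofs of Propositions \ref{4points} and \ref{main2}, I would normalize. Applying a fractional linear transformation to all of $q_1,\dots,q_5$ (resp.\ to all of $q_1',\dots,q_4'$) replaces $C$ by its image under a fractional linear transformation, and by Theorem \ref{spheres-thm} this does not change whether $C$ is a circle, a line, or a point; moreover, by (\ref{Q-conj}) it conjugates $Q(q_1,q_2,q_3,q_4)$ and $Q(q_1,q_2,q_3,q_5)$ by one and the same quaternion, so it preserves both the hypothesis $\im Q(q_1,q_2,q_3,q_4)\ne 0$ and the property that these two cross-ratios commute. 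Since $T_{q_1,q_2,q_3}(q)=Q(q_1,q_2,q_3,q)$, I can use $T_{q_1,q_2,q_3}$ and $T_{q_1',q_2',q_3'}$ to arrange $q_1=q_1'=0$, $q_2=q_2'=1$, $q_3=q_3'=\infty$, so that by (\ref{q}) one has $q_4=Q_4:=Q(q_1,q_2,q_3,q_4)$ and $q_5=Q_5:=Q(q_1,q_2,q_3,q_5)$; and since (\ref{assumption}) forces $|q_4'|=|Q_4|$ and $\re q_4'=\re Q_4$, Lemma \ref{conj} supplies $b\in\BB H^\times$ with $q_4'=bQ_4b^{-1}$, so one further transformation $\operatorname{Conj}_{b^{-1}}$ applied on the $q'$-side makes $q_4'=Q_4$ as well.

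In this normalized situation Lemma \ref{conj-transf} says that every fractional linear transformation fixing $q_1,q_2,q_3$ has the form $q\mapsto aqa^{-1}$, and such a transformation fixes $q_4=Q_4$ precisely when $a$ commutes with $Q_4$. Hence $C=\{\,aQ_5a^{-1};\ a\in\BB H^\times,\ aQ_4=Q_4a\,\}$ is nothing but the orbit of $Q_5$ under conjugation by the centralizer of $Q_4$ in $\BB H^\times$. This is where the hypothesis $\im Q_4\ne 0$ really enters: by Lemma \ref{commute} the $a$ commuting with $Q_4$ are exactly those with $\im a\in\BB R\cdot\im Q_4$, so the centralizer is the two-dimensional subalgebra $\BB R+\BB R\,\im Q_4$, a copy of $\BB C$. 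Choosing the unit imaginary quaternion $I=\im Q_4/|\im Q_4|$ and completing it to an orthonormal frame $I,J,K$ satisfying the quaternion relations, and writing $a$ in polar form inside $\BB R+\BB R I$, one sees that $\operatorname{Conj}_a$ is the rotation about the $I$-axis through an angle that assumes all values as $a$ ranges over the centralizer: it fixes the $1$- and $I$-coordinates and rotates the $(J,K)$-plane. So if $Q_5=p_0+p_1I+p_2J+p_3K$, then $C$ is the circle of radius $\sqrt{p_2^2+p_3^2}$ lying in the affine plane $(p_0+p_1I)+\BB R J+\BB R K$ when $(p_2,p_3)\ne(0,0)$, and the single point $\{Q_5\}$ when $p_2=p_3=0$; by Lemma \ref{commute}, using $\im Q_4\ne 0$ again, this last case occurs exactly when $Q_5$ commutes with $Q_4$. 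Finally, undoing the normalization by a fractional linear transformation and applying Theorem \ref{spheres-thm} converts the circle into a circle or a line and keeps the point a point.

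I expect the heart of the argument to be the explicit description of that conjugation orbit in the second paragraph — in particular pinning down that the centralizer of $Q_4$ is all of $\BB R+\BB R\,\im Q_4$ (which fails when $\im Q_4=0$, explaining the hypothesis) and that the induced rotations sweep out a full circle. The normalization is routine but needs a little care, because, in contrast with Proposition \ref{4points}, the standard reduction does not automatically make $q_4'$ equal to $q_4$ — that is exactly what the extra $\operatorname{Conj}_{b^{-1}}$ takes care of.
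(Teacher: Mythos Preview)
Your proof is correct and follows essentially the same route as the paper's: normalize the first three points to $0,1,\infty$ on both sides, reduce $C$ to a conjugation orbit of $q_5$, and identify that orbit as the trajectory of $\im q_5$ under rotations about the axis through $\im q_4$. The only cosmetic difference is that you push the normalization one step further, using $\operatorname{Conj}_{b^{-1}}$ to force $q_4'=q_4$ so that the relevant rotations are exactly the centralizer of $Q_4$; the paper instead leaves $q_4'$ alone and writes the rotations as $\rot'\circ\rot_0$ with a fixed $\rot_0$ taking $\im q_4$ to $\im q_4'$ --- your $b$ plays the role of their $\rot_0$, and the two descriptions are equivalent.
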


\begin{proof}
By Lemma \ref{01infty} and Theorem \ref{spheres-thm}, without loss of
generality we can assume that $q_1=q'_1=0$, $q_2=q'_2=1$, $q_3=q'_3=\infty$.
Then, by (\ref{q}),
$$
Q(q_1,q_2,q_3,q_4)=q_4 \qquad \text{and} \qquad Q(q_1,q_2,q_3,q_5)=q_5;
$$
by Lemma \ref{conj-transf},
$$
C= \{ aq_5a^{-1} ;\: a\in \BB H^{\times},\: aq_4a^{-1}=q'_4 \};
$$
by Theorem \ref{main} and assumption (\ref{assumption}),
the set $C$ is non-empty.

Geometrically, following the discussion preceding Lemma \ref{conj},
we are essentially looking at the set of all $\rot(\im q_5)$,
where $\rot \in SO(3)$ runs over all rotations taking $\im q_4$ into $\im q'_4$.
But all such rotations have the form $\rot' \circ \rot_0$,
where $\rot_0 \in SO(3)$ is a fixed rotation such that
$\rot_0(\im q_4) = \im q'_4$ and $\rot' \in SO(3)$ is a rotation about
the line passing through $\im q'_4$.
This proves that the set $C$ is either a circle, a line or a single point.

The case when $C$ is a point happens if and only if the rotation $\rot_0$
takes $\im q_5$ into a point lying on the line passing through $\im q'_4$.
That happens if and only if $\im q_5$ is a real multiple of $\im q_4$,
which, by Lemma \ref{commute}, happens if and only if
$Q(q_1,q_2,q_3,q_4)$ and $Q(q_1,q_2,q_3,q_5)$ commute.
\end{proof}

\end{document}